\definecolor{myorange}{RGB}{180,90,0}
\definecolor{mygreen}{RGB}{70,140,0}
\definecolor{amethyst}{rgb}{0.6, 0.4, 0.8}
\def\wrtext#1{\relax\ifmmode{\leavevmode\hbox{#1}}\else{#1}\fi}
\def\abs#1{\left|#1\right|}
\def\begeq{\begin{equation}}
\def\endeq{\end{equation}}
\def\part#1{\frac{\partial}{\partial #1}}
\def\norm#1{||\,#1\,||}
\newcommand{\real}{\mbox{\bf R}}
\newcommand{\comp}{\mbox{\bf C}}
\newcommand{\z}{\mbox{\bf Z}}
\newcommand{\nat}{\mbox{\bf N}}
\renewcommand{\exp}{\mbox{\rm exp\,}}
\newtheorem{dref}{Definition}[section]
\newtheorem{lemma}[dref]{Lemma}
\newtheorem{theo}[dref]{Theorem}
\newtheorem{prop}[dref]{Proposition}
\newenvironment{proof}{\vspace{.3cm}\noindent{{\em Proof:}}}{\hfill$\Box$}
\title{Semiclassical Gevrey operators and magnetic translations}
\author{Michael \textsc{Hitrik} \footnote{Department of Mathematics, University of California, Los Angeles CA 90095-1555, USA, {\sf hitrik@math.ucla.edu}} \and Richard \textsc{Lascar} \footnote{JAD - UMR 7351, Universit\'e C\^ote d'Azur Parc Valrose 06108 Nice Cedex 02, France, {\sf richard.lascar@univ-cotedazur.fr}} \and Johannes \textsc{Sj\"ostrand}\footnote{IMB, Universit\'e de Bourgogne 9, Av. A. Savary, BP 47870
FR-21078 Dijon, France and UMR 5584 CNRS, {\sf johannes.sjostrand@u-bourgogne.fr}} \and Maher \textsc{Zerzeri} \footnote{LAGA - UMR7539 CNRS, Universit\'e Sorbonne Paris-Nord, 99, avenue J.-B. Cl\'ement F-93430 Villetaneuse, France, {\sf zerzeri@math.univ-paris13.fr}}}
\date{}
\begin{document}
\maketitle

\begin{flushright}
\textit{In memory of Misha Shubin}
\end{flushright}

\vspace*{1cm}
\noindent
{\bf Abstract}: We study semiclassical Gevrey pseudodifferential operators acting on the Bargmann space of entire functions with quadratic exponential weights. Using so\-me ideas of the time frequency analysis, we show that such operators are uniformly bounded on a natural scale of exponentially weighted spaces of holomorphic functions, provided that the Gevrey index is $\geq 2$.

\tableofcontents
\section{Introduction and statement of results}
\setcounter{equation}{0}
\label{sec_introduction}
The purpose of this paper is to study continuity properties of semiclassical Gevrey pseudodifferential operators acting on exponentially weighted spaces of entire holomorphic functions on $\comp^n$, providing an alternative approach to some of the results established in the recent work~\cite{HiLaSjZe}. Let us proceed to describe the assumptions and state the main results.

\bigskip
\noindent
Let $\Omega \subset \real^m$ be open and let $s\geq 1$. The Gevrey class ${\cal G}^s(\Omega)$ consists of all functions $u\in C^{\infty}(\Omega)$ such that for any $K\subset \Omega$ compact there exist $A>0$, $C>0$ such that for all $\alpha \in \nat^m$ we have
\begeq
\label{intr1}
\abs{\partial^{\alpha} u(x)} \leq A C^{\abs{\alpha}} \left(\alpha!\right)^s,\quad x\in K.
\endeq
The class ${\cal G}^1(\Omega)$ is the space of real analytic functions on $\Omega$, while for $s>1$, we have ${\cal G}^s_0(\Omega) := {\cal G}^s(\Omega) \cap C^{\infty}_0(\Omega) \neq \{0\}$, see~\cite[Theorem 1.3.5]{Hormander_book}. In this work we shall specifically be concerned with the subspace ${\cal G}^s_b(\real^m) \subset {\cal G}^s(\real^m)$ of functions $u\in C^{\infty}(\real^m)$ satisfying the Gevrey condition (\ref{intr1}) uniformly on all of $\real^m$, for some $s>1$: we have $u\in {\cal G}^s_b(\real^m)$ precisely when there exist $A>0$, $C>0$ such that for all $\alpha \in \nat^m$, we have
\begeq
\label{intr1.1}
\abs{\partial^{\alpha} u(x)} \leq A C^{\abs{\alpha}} (\alpha!)^s,\quad x\in \real^m.
\endeq

\bigskip
\noindent
Let $\Phi_0$ be a strictly plurisubharmonic quadratic form on $\comp^n$ and let us introduce the real linear subspace
\begeq
\label{intr2}
\Lambda_{\Phi_0} = \left\{\left(x,\frac{2}{i}\frac{\partial \Phi_0}{\partial x}(x)\right), \, x\in \comp^n\right\} \subset \comp^{2n} = \comp^n_x \times \comp^n_{\xi}.
\endeq
Identifying $\Lambda_{\Phi_0}$ linearly with $\comp^n_x$, via the projection map $\pi_x: \Lambda_{\Phi_0} \ni (x,\xi) \mapsto x\in \comp^n_x$, we may define the Gevrey spaces ${\cal G}^s(\Lambda_{\Phi_0})$, ${\cal G}^s_0(\Lambda_{\Phi_0})$, ${\cal G}^s_b(\Lambda_{\Phi_0})$.

\medskip
\noindent
Given $a\in {\cal G}^s_b(\Lambda_{\Phi_0})$, for some $s>1$, and $u\in {\rm Hol}(\comp^n)$ such that for all $N\geq 0$ we have
$u(x) = {\cal O}_{h,N}(1) \langle{x\rangle}^{-N} e^{\Phi_0(x)/h}$, let us introduce the semiclassical Weyl quantization of $a$ acting on $u$,
\begeq
\label{intr3}
{\rm Op}_h^w(a) u(x) = \frac{1}{(2\pi h)^n}\int\!\!\!\!\int_{\Gamma(x)} e^{\frac{i}{h}(x-y)\cdot \theta} a\left(\frac{x+y}{2},\theta\right)u(y)\, dy\wedge d\theta.
\endeq
Here $0 < h \leq 1$ is the semiclassical parameter and $\Gamma(x)\subset \comp^{2n}_{y,\theta}$ is the natural integration contour given by
\begeq
\label{intr4}
\theta = \frac{2}{i} \frac{\partial \Phi_0}{\partial x}\left(\frac{x+y}{2}\right).
\endeq
The operator ${\rm Op}_h^w(a)$ extends to a uniformly bounded map
\begeq
\label{intr5}
{\rm Op}_h^w(a) = {\cal O}(1): H_{\Phi_0}(\comp^n) \rightarrow H_{\Phi_0}(\comp^n),
\endeq
see~\cite{Sj95},~\cite{HiSj15}. Here $H_{\Phi_0}(\comp^n)$ is the Bargmann space defined by
\begeq
\label{intr6}
H_{\Phi_0}(\comp^n) = {\rm Hol}(\comp^n) \cap L^2(\comp^n, e^{-2\Phi_0/h} L(dx)),
\endeq
with $L(dx)$ being the Lebesgue measure on $\comp^n$. Now the mapping property (\ref{intr5}) follows merely from the fact that $\nabla^k a\in L^{\infty}(\Lambda_{\Phi_0})$ for all $k\in \nat$, and the Gevrey smoothness of $a$ allows us to consider other weights as well. The effect of modifying the exponential weight has been considered in~\cite{HiLaSjZe}, and the following result has been established there, see~\cite[Theorem 3.3, Theorem 3.4]{HiLaSjZe}.

\begin{theo}
\label{theo_main0}
Let $a\in {\cal G}^s_b(\Lambda_{\Phi_0})$, $s>1$, and let $\Phi_1 \in C^{1,1}(\comp^n;\real)$ be such that
\begeq
\label{intr7}
\norm{\nabla^k(\Phi_1 - \Phi_0)}_{L^{\infty}({\bf C}^n)} \leq \frac{1}{C} h^{1 - \frac{1}{s}},\quad k = 0,1,2,
\endeq
where $C>0$ is large enough. Then the operator ${\rm Op}_h^w(a)$ extends to a uniformly bounded map
\begeq
\label{intr8}
{\rm Op}_h^w(a) = {\cal O}(1): H_{\Phi_1}(\comp^n) \rightarrow H_{\Phi_1}(\comp^n).
\endeq
Here, similarly to {\rm (\ref{intr6})}, we have set
$$
H_{\Phi_1}(\comp^n) = {\rm Hol}(\comp^n) \cap L^2(\comp^n, e^{-2\Phi_1/h} L(dx)).
$$
\end{theo}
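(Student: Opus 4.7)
My plan is to approach Theorem \ref{theo_main0} through the time-frequency machinery on the Bargmann space $H_{\Phi_0}(\comp^n)$ announced in the abstract: one represents $\Op_h^w(a)$ as a continuous superposition of rank-one operators built from Gaussian coherent states, and controls each building block using the magnetic translation operators $T_\alpha$ of the title. Fixing a normalized Gaussian vacuum $\varphi_0 \in H_{\Phi_0}(\comp^n)$ and writing $\varphi_\alpha := T_\alpha \varphi_0$ for $\alpha \in \comp^n$, the continuous resolution of identity $u = c_n \int \langle u,\varphi_\alpha\rangle_{H_{\Phi_0}}\, \varphi_\alpha \, L(d\alpha)$ converts the problem into Schur-type bounds for the representation
$$
\Op_h^w(a) = c_n^2 \int\!\!\!\int K_a(\alpha,\beta)\, |\varphi_\beta\rangle\langle \varphi_\alpha|\, L(d\alpha)\, L(d\beta), \qquad K_a(\alpha,\beta) := \langle \Op_h^w(a)\,\varphi_\alpha,\varphi_\beta\rangle_{H_{\Phi_0}}.
$$

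Two quantitative inputs are then needed. On the kernel side, one checks that $K_a(\alpha,\beta)$ is, up to an overall Gaussian factor $e^{-|\alpha-\beta|^2/(Ch)}$, essentially a local average of $a$ around the midpoint $(\alpha+\beta)/2$ at phase-space scale $\sqrt h$. The Gevrey-$s$ regularity of $a$, combined with a Dyn'kin-type almost-analytic extension $\tilde a$ off $\Lambda_{\Phi_0}$ satisfying $|\bar\partial \tilde a(z)| \leq C\exp\!\bigl(-c/\dist(z,\Lambda_{\Phi_0})^{1/(s-1)}\bigr)$, yields a sharpened off-diagonal decay of $K_a(\alpha,\beta)$ at the Gevrey rate; the precise form of this decay is what sets the scale $h^{1-1/s}$ in the conclusion. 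On the coherent-state side, although each $T_\alpha$ is unitary on $H_{\Phi_0}$, its interaction with the $H_{\Phi_1}$ norm is controlled by explicit Gaussian integrals, giving an estimate of the form
$$
\|\varphi_\alpha\|_{H_{\Phi_1}(\comp^n)} \leq C \exp\!\left(\tfrac{C}{h}\bigl(\|\nabla^2(\Phi_1-\Phi_0)\|_\infty\,|\alpha|^2 + \|\nabla(\Phi_1-\Phi_0)\|_\infty\,|\alpha| + \|\Phi_1-\Phi_0\|_\infty\bigr)\right),
$$
which under (\ref{intr7}) becomes $C\exp\!\bigl(|\alpha|^2/(Ch^{1/s}) + C|\alpha|/h^{1/s} + C\bigr)$, with an analogous bound for the dual pairing needed in Schur's test.

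Combining the two inputs and applying Schur's test to the double integral gives the desired uniform bound $\Op_h^w(a) = {\cal O}(1): H_{\Phi_1} \to H_{\Phi_1}$. The Gaussian factor $e^{-|\alpha-\beta|^2/(Ch)}$ in $K_a$ dominates the quadratic growth in the coherent-state norms once $h$ is small enough, while the sub-Gaussian Gevrey tail in $K_a$ absorbs the remaining linear growth $e^{C|\alpha|/h^{1/s}}$ through a Young-type inequality whose conjugate exponent is precisely the one dictated by the Gevrey index $s$.

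The main obstacle is the derivation of the sharp off-diagonal estimate on $K_a(\alpha,\beta)$: one has to convert the uniform Gevrey bound (\ref{intr1.1}) into a quantitative decay statement for the Bargmann matrix element that exactly saturates, at the critical scale $h^{1-1/s} = h^{(s-1)/s}$, the growth of the magnetic translations between $H_{\Phi_0}$ and $H_{\Phi_1}$. This balance --- and in particular the match between the scale on which $\Phi_1-\Phi_0$ and its first two derivatives are allowed to perturb and the Dyn'kin scale $\exp(-c/t^{1/(s-1)})$ governing the almost-analytic extension --- is the quantitative heart of the theorem. A secondary technical point is the careful set-up of the complex resolution of identity on $H_{\Phi_0}(\comp^n)$ and of the intertwining properties of the $T_\alpha$ needed to read off the above representation in a form suitable for Schur estimation.
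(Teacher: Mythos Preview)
The paper does not actually prove Theorem~\ref{theo_main0}: it is quoted from~\cite{HiLaSjZe}, where the proof proceeds by contour deformation in the integral (\ref{intr3}) using a Gevrey almost holomorphic extension of $a$ and Stokes' theorem. What the present paper proves is the weaker Theorem~\ref{theo_main} (the range $s\geq 2$, under the milder hypothesis $k=0,1$ only), and it does so by exactly the time-frequency/rank-one/magnetic-translation machinery you describe. So your proposal is not a proof of Theorem~\ref{theo_main0}; it is, in outline, the paper's proof of Theorem~\ref{theo_main}.

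The restriction to $s\geq 2$ is not an artifact you can remove by sharpening the kernel estimate with a Dyn'kin extension. In the paper's version of your Schur bound one arrives at (see (\ref{Wien50}))
\[
K(x,z)\ \leq\ \frac{{\cal O}(1)}{h^n}\,e^{(f(z)-f(x))/h}\,e^{-\abs{x-z}^2/Ch}\ +\ \frac{{\cal O}(1)}{h^n}\,e^{(f(z)-f(x))/h}\,\exp\Bigl(-\tfrac{1}{C_0}\bigl(\tfrac{\abs{x-z}}{h}\bigr)^{1/s}\Bigr).
\]
The second (Gevrey) term is harmless for every $s>1$. The obstruction is the first term, whose Gaussian comes from the overlap of the coherent states themselves and has nothing to do with $a$; its Schur integral is computed in (\ref{Wien52})--(\ref{Wien56}) and produces the factor $\exp\bigl({\cal O}(1)\,h^{1-2/s}\bigr)$, which blows up as $h\to 0$ precisely when $s<2$. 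In other words: Gaussian coherent states live at scale $h^{1/2}$, and for $1<s<2$ this is coarser than the critical scale $h^{1-1/s}$, so the Schur test on this representation is already inconclusive for $a\equiv 1$. No improvement of the off-diagonal decay of $K_a$ coming from regularity of $a$ touches this term. The paper states this limitation explicitly in the introduction.

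Two smaller points. First, your bound $\norm{\varphi_\alpha}_{H_{\Phi_1}}\leq C\exp\bigl(C h^{-1}\norm{\nabla^2 f}_\infty\abs{\alpha}^2+\cdots\bigr)$ is not the right estimate: since $\varphi_\alpha$ is localized near $\alpha$ and $\norm{f}_{L^\infty}\leq h^{1-1/s}/C$, one has $\norm{\varphi_\alpha}_{H_{\Phi_1}}\asymp e^{-f(\alpha)/h}$ up to factors bounded uniformly in $\alpha$; the second derivative of $f$ never enters, and indeed Theorem~\ref{theo_main} requires only $k=0,1$. What matters for Schur is the difference $f(z)-f(x)$, controlled as in (\ref{Wien51}) by $\min(2\norm{f}_\infty,\norm{\nabla f}_\infty\abs{x-z})$. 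Second, your assertion that ``the Gaussian factor $e^{-\abs{\alpha-\beta}^2/(Ch)}$ dominates the quadratic growth in the coherent-state norms'' is false as stated: the Gaussian depends on $\alpha-\beta$, while your claimed growth is in $\abs{\alpha}$ and $\abs{\beta}$ separately, so there is no cancellation along the diagonal.
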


\bigskip
\noindent
The proof of Theorem \ref{theo_main0} in~\cite{HiLaSjZe} proceeds by a contour deformation argument in (\ref{intr3}), introducing a Gevrey almost holomorphic extension of $a\in {\cal G}^s_b(\Lambda_{\Phi_0})$ and making use of Stokes's theorem. A noteworthy aspect of the proof developed  in~\cite{HiLaSjZe} is that performing a deformation to a contour of the form
\begeq
\label{intr9}
\theta = \frac{2}{i} \frac{\partial \Phi_0}{\partial x}\left(\frac{x+y}{2}\right) + \frac{i}{C}\overline{(x-y)},\quad C>0,
\endeq
natural in the analytic theory~\cite{Sj82},~\cite{Sj95},~\cite{HiSj15}, does not lead to some exponentially accurate remainder estimates of the form
\begeq
\label{intr9.1}
{\cal R} = {\cal O}(1)\, \exp\left(-\frac{1}{{\cal O}(1)} h^{-\frac{1}{s}}\right): H_{\Phi_0}(\comp^n) \rightarrow L^2(\comp^n, e^{-2\Phi_0/h} L(dx)),
\endeq
natural in the Gevrey theory. To overcome this issue, the argument in~\cite{HiLaSjZe} proceeds by deforming to a suitable "mixed" contour, using (\ref{intr9}) in the region
$$
\abs{x-y}\leq \frac{1}{{\cal O}(1)} h^{1 - \frac{1}{s}}
$$
only. An additional deformation to a "mixed" contour adapted to the weight $\Phi_1$ leads then to the uniform boundedness in (\ref{intr8}) in the range $s\in (1,2]$ only, and some further work, involving another change of contour, is required to recover the mapping property (\ref{intr8}) in the full range $s>1$. See~\cite[Theorem 3.4]{HiLaSjZe}. Let us also remark that, as explained in~\cite{HiLaSjZe}, when obtaining a uniformly bounded realization of the operator in (\ref{intr8}) for $s>2$, via a contour deformation, one has to accept a remainder which is larger than the one in (\ref{intr9.1}).

\bigskip
\noindent
Our purpose here is to give a direct proof of Theorem \ref{theo_main0} in the "complementary" region $s\geq 2$, avoiding the use of contour deformations entirely. Specifically, the following is the main result of this work.
\begin{theo}
\label{theo_main}
Let $a\in {\cal G}^s_b(\Lambda_{\Phi_0})$, for some $s\geq 2$, and let $\Phi_1 \in C^{1,1}(\comp^n;\real)$ be such that
\begeq
\label{intr10}
\norm{\nabla^k(\Phi_1 - \Phi_0)}_{L^{\infty}({\bf C}^n)} \leq \frac{1}{C} h^{1 - \frac{1}{s}},\quad k = 0,1,
\endeq
where $C>0$ is large enough. Then the operator ${\rm Op}_h^w(a)$ extends to a uniformly bounded map
\begeq
\label{intr11}
{\rm Op}_h^w(a) = {\cal O}(1): H_{\Phi_1}(\comp^n) \rightarrow H_{\Phi_1}(\comp^n).
\endeq
\end{theo}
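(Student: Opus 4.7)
The plan is to avoid contour deformation entirely and instead establish a direct kernel estimate for ${\rm Op}_h^w(a)$, using magnetic translations and a wave packet decomposition of the symbol drawn from time-frequency analysis. For $z_0 \in \Lambda_{\Phi_0}$ (identified with $\comp^n$ via $\pi_x$), let $T_{z_0}$ denote the magnetic translation: the operator on $H_{\Phi_0}$, unitary by construction, that intertwines ${\rm Op}_h^w$ with phase-space translation of the symbol, $T_{z_0}^{-1}{\rm Op}_h^w(b)T_{z_0} = {\rm Op}_h^w(b(\cdot + z_0))$. A short direct computation using the hypothesis $\norm{\nabla(\Phi_1 - \Phi_0)}_{L^\infty} \leq h^{1-1/s}/C$ shows that $\norm{T_{z_0}}_{H_{\Phi_1} \to H_{\Phi_1}} \leq \exp(h^{-1/s}|z_0|/C')$, so that translations by $|z_0| = O(h^{1/s})$ are still $O(1)$-bounded on the perturbed weighted space.

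The first main step is to prove that the Schwartz kernel $K(x,y)$ of ${\rm Op}_h^w(a)$ on the Bargmann side has the form $K(x,y) = h^{-n}e^{(2\Psi(x,\bar y) - \Phi_0(x)-\Phi_0(y))/h}L(x,y)$, with $\Psi$ the polarization of $\Phi_0$ and with a Gaussian envelope $|L(x,y)| \leq C e^{-|x-y|^2/(Ch)}$, modulo an exponentially small Gevrey remainder of size $\exp(-h^{-1/s}/C)$. To achieve this without contour deformation, I would decompose $a$ via a Gabor-type partition of unity $\{\chi_\alpha\}$ drawn from a fixed ${\cal G}^s$ window along a lattice $\{z_\alpha\} \subset \Lambda_{\Phi_0}$, expand each $a\chi_\alpha$ in a truncated Taylor or Fourier series around $z_\alpha$ at the Gevrey-optimal order $N \sim h^{-1/s}$, and then analyze ${\rm Op}_h^w(a\chi_\alpha)$ after conjugation by $T_{z_\alpha}$, which reduces it to a model operator with compactly supported symbol near the origin, uniformly bounded on $H_{\Phi_0}$ by (\ref{intr5}).

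With such a kernel estimate in hand, boundedness on $H_{\Phi_1}$ follows from a Schur test. Switching the weight from $\Phi_0$ to $\Phi_1$ inserts an extra factor $\exp((r(y)-r(x))/h)$ into the normalized kernel, with $r = \Phi_1-\Phi_0$; by (\ref{intr10}), this is bounded by $\exp(h^{-1/s}|x-y|/C)$. The Schur integrand then involves $\exp(-|x-y|^2/(Ch) + h^{-1/s}|x-y|/C)$, and the maximum of the exponent,
$$
\sup_{t\geq 0}\left(-\frac{t^2}{Ch} + \frac{h^{-1/s}t}{C'}\right) = O(h^{1-2/s}),
$$
is uniformly bounded in $h \in (0,1]$ \emph{precisely} when $s \geq 2$, which is exactly where the hypothesis of the theorem enters. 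Summing the Schur bound against the reproducing kernel of $H_{\Phi_0}$ then yields (\ref{intr11}).

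The main obstacle will be producing the Gaussian envelope for $L(x,y)$ in the Gevrey (non-analytic) setting without complex contour arguments; this is precisely where the magnetic-translation / wave packet decomposition earns its keep, trading the unavailable holomorphic extension of $a$ for a quantitative Gevrey truncation of a finite wave-packet expansion, with the error absorbed into the exponentially small remainder $\exp(-h^{-1/s}/C)$. Note also that only the $C^1$-norm of $r = \Phi_1-\Phi_0$ enters the Schur step, which explains why (\ref{intr10}) requires control of $\nabla^k r$ only for $k = 0, 1$, in contrast to the $C^{1,1}$ hypothesis (\ref{intr7}) needed in the contour-deformation proof of Theorem~\ref{theo_main0}.
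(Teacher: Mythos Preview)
Your overall strategy matches the paper's: avoid contour deformation, use magnetic translations and a time-frequency decomposition of the symbol, and finish with a Schur test in which the change of weight contributes a factor $\exp(h^{-1/s}\abs{x-y}/C)$. You have also correctly located where the hypothesis $s\geq 2$ enters, namely in the maximization
\[
\sup_{t\geq 0}\left(-\frac{t^2}{Ch}+\frac{h^{-1/s}t}{C'}\right)={\cal O}\bigl(h^{1-2/s}\bigr),
\]
and your remark that only the $C^1$ norm of $\Phi_1-\Phi_0$ is used is exactly right.

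The genuine gap is your ``first main step''. You assert that the effective kernel of ${\rm Op}_h^w(a)$ carries a Gaussian envelope $\abs{L(x,y)}\leq C e^{-\abs{x-y}^2/(Ch)}$ up to an $\exp(-h^{-1/s}/C)$ remainder, but the mechanism you propose --- a Gabor partition $\{\chi_\alpha\}$ on a fixed lattice followed by a Gevrey-optimal Taylor/Fourier truncation of $a\chi_\alpha$ and an appeal to the $H_{\Phi_0}$--boundedness (\ref{intr5}) of each piece --- does not produce Gaussian off-diagonal decay. A compactly supported ${\cal G}^s$ symbol, quantized along the real contour (\ref{intr4}), gives only the Gevrey-type kernel decay $\exp\bigl(-(\abs{x-y}/h)^{1/s}/C\bigr)$ coming from the Fourier decay of the symbol; there is no source of Gaussian localization in $\abs{x-y}$ in your scheme. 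Moreover, knowing that each ${\rm Op}_h^w(a\chi_\alpha)$ is ${\cal O}(1)$ on $H_{\Phi_0}$ does not by itself let you sum over the infinite lattice $\{z_\alpha\}$, nor pass to $H_{\Phi_1}$. You yourself flag this as ``the main obstacle'', and indeed it is the heart of the matter.

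The paper closes this gap by a device you do not mention: it takes the window $\chi_0$ to be the Weyl symbol of the rank-one orthogonal projection onto a fixed Gaussian coherent state $v_0\in H_{\Phi_0}$ (so $\chi_0$ is itself a Gaussian on $\Lambda_{\Phi_0}$, rescaled to live at scale $h^{1/2}$). Then ${\rm Op}_h^w\bigl(e^{2i\sigma(\cdot,Y)/h}\chi_0((\cdot-T)/h^{1/2})\bigr)$ is, up to unimodular factors, the rank-one operator $u\mapsto (u,\,e^{-i\ell_{Y+T}/h}v_0)\,e^{i\ell_{Y-T}/h}v_0$. The Gaussian envelope in the final kernel bound comes not from any property of $a$ but from the explicit pointwise Gaussian decay of $e^{-\Phi_0/h}\abs{v_0}$, while the Gevrey regularity of $a$ enters only through the Wiener-type bound $\sup_T\abs{{\cal F}(\chi_T a)(Y)}\leq C\exp(-\abs{Y}^{1/s}/C)$. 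The resulting Schur kernel is a sum of a Gaussian piece (which forces $s\geq 2$, exactly as you computed) and a Gevrey piece $\exp\bigl(-(\abs{x-z}/h)^{1/s}/C\bigr)$ (which is harmless for all $s>1$). To make your outline into a proof you would need to supply an analogous source of Gaussian localization; the rank-one trick with a Gaussian window is the cleanest way to do this.
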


\bigskip
\noindent
When establishing Theorem \ref{theo_main}, rather than performing a contour deformation in (\ref{intr3}), we shall proceed by following some basic ideas of the time frequency analysis~\cite{Gr_book},\cite{GrHe},~\cite{Sj08}, decomposing the symbol $a$ into a superposition of coherent states of the form $\Lambda_{\Phi_0} \ni X\mapsto e^{2i\sigma(X,Y)/h} \chi_0((X-T)/h^{1/2})$, for $Y,T\in \Lambda_{\Phi_0}$. Here $\sigma$ is the complex symplectic form on $\comp^{2n}_{x,\xi}$ and $\chi_0$ is a fixed function in ${\cal G}^s_b(\Lambda_{\Phi_0})$, which we can choose essentially as a real Gaussian. Passing to the Weyl quantizations leads to the representation of the operator ${\rm Op}_h^w(a)$ as a direct integral of certain rank one projections expressed in terms of the "magnetic translations" $e^{i\sigma((x,hD_x),Y)/h}$, for $Y\in \Lambda_{\Phi_0}$, unitary on $H_{\Phi_0}(\comp^n)$, whose operator norm on $H_{\Phi_1}(\comp^n)$ can be controlled. Theorem \ref{theo_main} follows from these observations, by an application of Schur's lemma, when combined with the Wiener type characterization of the Gevrey space ${\cal G}^s_b(\Lambda_{\Phi_0})$. We refer to~\cite{Sj94},~\cite{Sj_XEDP_95} for the original works on the Wiener algebras of pseudodifferential operators. See also~\cite{Gr06},~\cite{Sj08}, and the references given there. It is perhaps of note that the time frequency approach to the proof of Theorem \ref{theo_main} appears to work for the Gevrey indices $s\geq 2$ only, which is precisely the range where the contour deformation method of~\cite{HiLaSjZe} encounters some difficulties.

\bigskip
\noindent
The plan of the paper is as follows. In Section \ref{magn_transl}, we study mapping properties of magnetic translations on the weighted spaces
$H_{\Phi_0}(\comp^n)$, $H_{\Phi_1}(\comp^n)$. In Section \ref{Fourier_inv}, as a preparation for the proof of Theorem \ref{theo_main}, we consider  compactly supported Gevrey symbols $a\in {\cal G}^s_0(\Lambda_{\Phi_0})$. In this case, the mapping property (\ref{intr11}) can be established in the full range $s>1$, by decomposing the operator ${\rm Op}^w_h(a)$ into a superposition of magnetic translations directly. Section \ref{Gevrey_Wiener} is devoted to the proof of Theorem \ref{theo_main} in the general case. As alluded to above, an essential role in the proof is played by a decomposition of the operator ${\rm Op}^w_h(a)$ into a direct integral of rank one projections, and we would like to emphasize that it can be viewed as the Bargmann space analogue of the corresponding decomposition established in~\cite{GrHe} in the real setting. In Appendix  \ref{comp_Gevrey} we recall the composition formulas for the Weyl $h$--pseudodifferential calculus in the complex domain, obtained by the method of magnetic translations.

\medskip
\noindent
{\it {We dedicate this paper to the memory of Misha Shubin and would like to acknowledge his pioneering contributions to the global theory of pseudodifferential operators~{\rm \cite{Shubin_book}},~{\rm \cite{TS}}, of which this work is a more recent descendant.}}

\section{Magnetic translations on weighted spaces}
\label{magn_transl}
\setcounter{equation}{0}
Let $\Phi_0$ be a strictly plurisubharmonic quadratic form on $\comp^n$ and let $\Lambda_{\Phi_0}\subset \comp^{2n}$ be defined as in (\ref{intr2}). The real $2n$-dimensional linear subspace $\Lambda_{\Phi_0}$ is I-Lagrangian and R-symplectic, in the sense that the restriction of the complex symplectic (2,0)--form
\begeq
\label{eq2.1.1}
\sigma = \sum_{j=1}^n d\xi_j \wedge dx_j
\endeq
on $\comp^{2n} = \comp^n_x \times \comp^n_{\xi}$ to $\Lambda_{\Phi_0}$ is real and non-degenerate. In particular, $\Lambda_{\Phi_0}$ is maximally totally real.

\bigskip
\noindent
Let $\ell(x,\xi)$ be a complex linear form on $\comp^{2n}$ so that
\begeq
\label{eq2.1.2}
\ell(x,\xi )= \ell'_x\cdot x+\ell'_\xi \cdot \xi = \sigma ((x,\xi),H_\ell),\quad H_\ell = \ell'_\xi\cdot \partial_x-\ell'_x\cdot \partial_\xi.
\endeq
Let us notice that the restriction $\ell|_{\Lambda_{\Phi_0}}$ is real precisely when the Hamilton vector field $H_{\ell}\in T\Lambda_{\Phi_0}$. Here we identify the holomorphic (constant) vector field $H_{\ell}$ with the corresponding real vector field $H_{\ell}^{\rho} = H_{\ell} + \overline{H_{\ell}}$. We have therefore
\begeq
\label{eq2.2}
-\ell'_x = \frac{2}{i}\left((\Phi_0) ''_{xx}\ell'_\xi +(\Phi_0)''_{x\overline{x}}\overline{\ell'_\xi } \right) = \frac{2}{i} \frac{\partial \Phi_0}{\partial x}(\ell'_{\xi}),
\endeq
and we may write
\begeq
\label{eq2.3}
\ell(x,\xi) = \sigma((x,\xi),H_{\ell}) = -\frac{2}{i} \frac{\partial \Phi_0}{\partial x}(x^*)\cdot x + x^*\cdot \xi, \quad (x,\xi) \in \comp^{2n},
\endeq
for some unique $\comp^n \ni x^* = \ell'_{\xi}$.

\bigskip
\noindent
To a complex linear form $\ell$ on $\comp^{2n}$, such that $\ell|_{\Lambda_{\Phi_0}}$ is real, we associate the operator
\begeq
\label{eq2.3.1}
{\rm Op}_h^w\left(e^{-i\ell/h}\right) = e^{-i\ell(x,hD_x)/h},
\endeq
and recall from~\cite[Proposition 1.4]{Sj95} that
\begeq
\label{eq2.4}
e^{-i\ell(x,hD_x)/h} = e^{-\frac{i}{2h} \ell'_x\cdot x} \circ \tau_{\ell'_{\xi}} \circ e^{-\frac{i}{2h} \ell'_x\cdot x}.
\endeq
Here $\tau_z$ is the operator of translation by $z\in \comp^n$, $(\tau_z u)(x) = u(x-z)$. Throughout this paper, operators of the form (\ref{eq2.4}) will be referred to as magnetic translations, in view of the fact that such operators appear naturally in the study of Schr\"odinger operators with magnetic fields, see~\cite{Sj91},~\cite{DGR}.

\bigskip
\noindent
Let us recall from~\cite{Sj95} that the first order differential operator $\ell(x,hD_x)$ is selfadjoint on the Bargmann space $H_{\Phi_0}(\comp^n)$, when equipped with the maximal domain $\{u\in H_{\Phi_0}(\comp^n);\, \ell(x,hD_x) u\in H_{\Phi_0}(\comp^n)\}$. In particular, the operator in (\ref{eq2.3.1}) is unitary when acting on $H_{\Phi_0}(\comp^n)$. For future reference, it will be convenient for us to start by doing the exercise mentioned in the proof of~\cite[Proposition 1.4]{Sj95}, verifying the unitarity directly, using the expression (\ref{eq2.4}). See also~\cite[Section 3]{Sj08}.

\bigskip
\noindent
Given $u\in H_{\Phi_0}(\comp^n)$, we shall show that
\begeq
\label{eq2.5}
\abs{(e^{-\Phi _0/h}e^{-i\ell(x,hD_x)/h}u)(x)} = \abs{(e^{-\Phi _0/h}u)(x-\ell'_\xi)},\quad x\in \comp^n.
\endeq
When doing so, let us write using (\ref{eq2.4}),
\begeq
\label{eq2.6}
e^{-i\ell(x,hD_x)/h}u(x) = e^{\frac{i}{2h} \ell'_x\cdot \ell'_{\xi}} e^{-\frac{i}{h} \ell'_x\cdot x}u(x-\ell'_{\xi}).
\endeq
A simpler expression is obtained, by exploiting the natural symmetry in the Weyl quantization, if we express $e^{-i\ell(x,hD_x)/h}u$ at the point
$x+\ell '_\xi /2$ in terms of $u$ at the point $x-\ell '_\xi /2$, which gives that
\begeq
\label{eq2.6.1}
(e^{-i\ell(x,hD_x)/h}u)\left(x+\frac{\ell'_\xi}{2}\right) = e^{-i\ell '_x\cdot x/h}u\left(x-\frac{\ell'_\xi}{2}\right).
\endeq
It follows that
\begin{multline}
\label{eq2.7}
\left(e^{-\frac{1}{h}{\Phi _0}}e^{-i\ell(x,hD_x)/h}u\right)\left(x+\frac{\ell'_\xi}{2}\right)
\\ = e^{-\frac{1}{h}\left( {\Phi _0} (x+\frac{1}{2}\ell '_\xi )-{\Phi _0}
  (x-\frac{1}{2}\ell '_\xi ) \right)}e^{-i\ell '_x\cdot
x/h}\left(e^{-\frac{1}{h}{\Phi _0} } u\right)\left(x-\frac{\ell'_\xi}{2}\right).
\end{multline}
We next observe that
\begeq
\label{eq2.8}
\Phi_0\left(x+\frac{1}{2}\ell '_\xi\right)- \Phi_0\left(x-\frac{1}{2}\ell '_\xi\right) = 2{\rm Re}\, \left(\partial _x{\Phi _0} (x)\cdot \ell '_\xi\right),
\endeq
in view of the following standard consequence of Taylor's formula
\begeq
\label{eq2.9}
\Phi_0(x) - \Phi_0(y) = 2 {\rm Re}\, \biggl(\partial_x \Phi_0\left(\frac{x+y}{2}\right) \cdot (x-y) \biggr),
\endeq
valid for the real valued quadratic form $\Phi_0$. Moreover, using (\ref{eq2.2}) we see that the right hand side of (\ref{eq2.8}) takes the form
\begin{multline}
\label{eq2.10}
2 {\rm Re}\, \left(\partial_x{\Phi _0} (x)\cdot \ell '_\xi\right) = 2 {\rm Re}\, \left(\partial_x{\Phi _0} (\ell'_{\xi})\cdot x\right) \\
= - {\rm Re}\, \left(i\left(-\frac{2}{i}\partial_x {\Phi _0} (\ell '_\xi )\cdot x \right)\right) = - {\rm Re}\, \left(i\ell '_x\cdot x \right).
\end{multline}
Here we have also used the general relation
\begeq
\label{eq2.11}
{\rm Re}\, \biggl(\partial_x \Phi_0(x)\cdot y \biggr) = {\rm Re}\, \biggl(\partial_x \Phi_0(y)\cdot x\biggr),\quad x,y\in \comp^n.
\endeq
Combining (\ref{eq2.8}) and (\ref{eq2.10}), we obtain that
\begeq
\label{eq2.11.1}
{\Phi _0}\left(x+\frac{1}{2}\ell '_\xi\right) - {\Phi _0}\left(x-\frac{1}{2}\ell '_\xi\right) + {\rm Re}\, \left(i\ell '_x\cdot x \right)=0,
\endeq
and the absolute value of the prefactor in the right hand side of (\ref{eq2.7}) is therefore equal to $1$. We get from (\ref{eq2.7}) and (\ref{eq2.11.1}),
$$
\left|\left(e^{-\frac{1}{h}{\Phi_0}}e^{-i\ell(x,hD_x)/h}u\right)\left(x+\frac{\ell'_\xi}{2}\right)\right|
= \left| \left(e^{-\frac{1}{h}{\Phi _0} } \right) u\left(x-\frac{\ell'_\xi}{2}\right)\right|,
$$
and (\ref{eq2.5}) follows, by replacing $x$ by $x-\ell '_\xi /2$.

\bigskip
\noindent
It follows from (\ref{eq2.5}) that the operator
\begeq
\label{eq2.12}
e^{-i\ell(x,hD_x)/h}: H_{\Phi_0}(\comp^n) \rightarrow H_{\Phi_0}(\comp^n)
\endeq
is an isometry, and is therefore unitary, since it is a bijection, with the inverse given by $e^{i\ell(x,hD_x)/h}: H_{\Phi_0}(\comp^n) \rightarrow H_{\Phi_0}(\comp^n)$.

\bigskip
\noindent
{\it Remark}. Magnetic translations play a role also in the theory of Toeplitz operators, where they appear under the name of the Weyl operators, see~\cite{LC}.

\bigskip
\noindent
We shall now consider weights other than $\Phi_0$. To this end, let $\Phi_1 \in C^{1,1}(\comp^n;\real)$, the space of $C^1$--functions on $\comp^n$ with a globally Lipschitz gradient. In particular we know, thanks to Rademacher's theorem, that
\begeq
\label{eq2.13}
\nabla^2 \Phi_1 \in L^{\infty}(\comp^n).
\endeq
Let us also assume that $\norm{\nabla^k(\Phi_1 - \Phi_0)}_{L^{\infty}({\bf C}^n)}$ are small enough, for $k=0,1$. We would like to consider magnetic translations acting on the weighted space
\begeq
\label{eq2.14}
H_{\Phi_1}(\comp^n) = L^2(\comp^n, e^{-2\Phi_1/h} L(dx)) \cap {\rm Hol}(\comp^n).
\endeq
To this end, viewing the operator in (\ref{eq2.4}) as a Fourier integral operator associated to the complex canonical transformation $\exp(H_{\ell})$,
we shall first determine the image of the Lipschitz manifold
\begeq
\label{eq2.15}
\Lambda_{\Phi_1} = \left\{\left(x,\frac{2}{i}\frac{\partial \Phi_1}{\partial x}(x)\right), \, x\in \comp^n\right\} \subset \comp^{2n},
\endeq
under the map $\exp(H_{\ell}): \comp^{2n} \ni \rho \mapsto \rho + H_{\ell}\in \comp^{2n}$. Here we may notice that the manifold $\Lambda_{\Phi_1}$ is I-Lagrangian, in the sense that its almost everywhere defined tangent plane is Lagrangian with respect to ${\rm Im}\, \sigma$.

\bigskip
\noindent
We have the following result, where we write $\Phi_1(x) = \Phi_0(x) + f(x)$.
\begin{prop}
\label{prop_weights}
Let $\ell(x,\xi)$ be a complex linear form on $\comp^{2n}$ such that $\ell|_{\Lambda_{\Phi_0}}$ is real, and let us represent $\ell$ in the form {\rm (\ref{eq2.3})}. Then we have
$$
\exp(H_{\ell})\left(\Lambda_{\Phi_1}\right) = \Lambda_{\Phi_2},
$$
where $\Phi_2\in C^{1,1}(\comp^n)$ is given by
\begeq
\label{eq2.16}
\Phi_2(x) = \Phi_1(x) + f(x-x^*) - f(x),\quad x \in \comp^n.
\endeq
\end{prop}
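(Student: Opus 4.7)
The plan is to exploit the linearity of $\ell$, which makes $H_\ell$ a constant Hamilton vector field on $\comp^{2n}$. From $H_\ell = \ell'_\xi\cdot\partial_x - \ell'_x\cdot\partial_\xi$ together with the representation (\ref{eq2.3}), we read off
\[
H_\ell \;=\; x^*\cdot\partial_x \;+\; \frac{2}{i}\frac{\partial\Phi_0}{\partial x}(x^*)\cdot\partial_\xi,
\]
so that the time-one flow $\exp(H_\ell)$ is nothing but the translation $\rho \mapsto \rho + \bigl(x^*,\,\tfrac{2}{i}\partial_x\Phi_0(x^*)\bigr)$ on $\comp^{2n}$. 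The proof then reduces to a direct computation of how this translation acts on the Lipschitz graph $\Lambda_{\Phi_1}$.

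First I would parametrize a point of $\Lambda_{\Phi_1}$ as $\bigl(y,\,\tfrac{2}{i}\partial_x\Phi_1(y)\bigr)$, valid for a.e. $y\in \comp^n$ since $\Phi_1\in C^{1,1}$, apply $\exp(H_\ell)$, and change variables to $x = y+x^*$. The image is then $(x,\eta(x))$ with
\[
\eta(x) \;=\; \frac{2}{i}\frac{\partial\Phi_1}{\partial x}(x-x^*) \;+\; \frac{2}{i}\frac{\partial\Phi_0}{\partial x}(x^*).
\]
To conclude that this image lies on $\Lambda_{\Phi_2}$, I need to verify that $\eta(x) = \tfrac{2}{i}\partial_x\Phi_2(x)$ for the $\Phi_2$ given in (\ref{eq2.16}).

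To see this, note that writing $\Phi_1 = \Phi_0 + f$ the formula (\ref{eq2.16}) telescopes to $\Phi_2(x) = \Phi_0(x) + f(x-x^*)$, so $\partial_x\Phi_2(x) = \partial_x\Phi_0(x) + \partial_x f(x-x^*)$. On the other hand, expanding $\partial_x\Phi_1 = \partial_x\Phi_0 + \partial_x f$ gives
\[
\frac{i}{2}\,\eta(x) \;=\; \partial_x\Phi_0(x-x^*) \;+\; \partial_x f(x-x^*) \;+\; \partial_x\Phi_0(x^*),
\]
and the $\real$-linearity of $x \mapsto \partial_x\Phi_0(x)$ — which holds because $\Phi_0$ is a quadratic form — yields the additivity identity $\partial_x\Phi_0(x-x^*) + \partial_x\Phi_0(x^*) = \partial_x\Phi_0(x)$, closing the identification $\exp(H_\ell)(\Lambda_{\Phi_1}) = \Lambda_{\Phi_2}$.

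I do not anticipate a serious obstacle: the whole argument is essentially a one-line computation once one recognizes the translation nature of $\exp(H_\ell)$ and uses the linearity of $\partial_x\Phi_0$. The only minor subtlety is that, since $\Phi_1$ is merely $C^{1,1}$, the graph $\Lambda_{\Phi_1}$ has tangent planes only almost everywhere; however, the identification above holds pointwise at a.e.\ $y$, which suffices to identify the two Lipschitz graphs $\exp(H_\ell)(\Lambda_{\Phi_1})$ and $\Lambda_{\Phi_2}$, and the $C^{1,1}$ regularity of $\Phi_2$ is then immediate from the explicit formula (\ref{eq2.16}).
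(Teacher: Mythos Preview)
Your argument is correct and is in fact more elementary than the paper's. The paper proceeds via a time-dependent Hamilton--Jacobi equation: it introduces $\Psi(x,t)$ solving
\[
\partial_t \Psi(x,t) - {\rm Im}\,\ell\Bigl(x,\tfrac{2}{i}\partial_x\Psi(x,t)\Bigr)=0,\qquad \Psi(x,0)=\Phi_1(x),
\]
argues geometrically that $\Lambda_{\Psi(\cdot,t)} = \exp(tH_\ell)(\Lambda_{\Phi_1})$ by lifting to a Lagrangian in $\real^2_{t,\tau}\times\comp^{2n}$, and then checks by hand that $\Psi(x,t)=\Phi_0(x)+f(x-tx^*)$ is the solution. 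You instead exploit directly that for a linear $\ell$ the Hamilton field $H_\ell$ is constant, so $\exp(H_\ell)$ is the explicit translation $\rho\mapsto\rho+(x^*,\tfrac{2}{i}\partial_x\Phi_0(x^*))$, and then verify on the level of graphs that the translate of $\Lambda_{\Phi_1}$ is $\Lambda_{\Phi_2}$ using the $\real$-linearity of $\partial_x\Phi_0$. Your route is shorter and avoids the Hamilton--Jacobi machinery entirely; the paper's approach, on the other hand, is the standard FIO-theoretic one and would generalize to nonlinear Hamiltonians, which is presumably why the authors present it that way (and it dovetails with the remark following Theorem~\ref{theo_weights}).

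One small cosmetic point: since $\Phi_1\in C^{1,1}\subset C^1$, the gradient $\partial_x\Phi_1$ is defined everywhere, so your parametrization $\bigl(y,\tfrac{2}{i}\partial_x\Phi_1(y)\bigr)$ and the identity $\eta(x)=\tfrac{2}{i}\partial_x\Phi_2(x)$ hold for \emph{all} $y$ (resp.\ $x$), not merely almost everywhere. The ``a.e.'' qualifier is only needed for second derivatives or tangent planes, which your argument does not use.
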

\begin{proof}
Following the proof of~\cite[Lemma 2.2]{CoHiSjWh}, let us consider the real Hamilton-Jacobi equation
\begeq
\label{eq2.17}
\frac{\partial \Psi}{\partial t}(x,t) - {\rm Im}\, \ell\left(x,\frac{2}{i}\frac{\partial \Psi}{\partial x}(x,t)\right) = 0, \quad \Psi(x,0) = \Phi_1(x),
\endeq
for $x\in \comp^n$, $t\in \real$, $t\geq 0$. Associated to the function $\Psi(x,t)\in C^{1,1}(\comp^n\times \real;\real)$ in (\ref{eq2.17}) is the Lipschitz manifold
\begeq
\label{eq2.18}
L_{\Psi} = \left\{\left(t, \frac{\partial \Psi}{\partial t}(x,t), x, \frac{2}{i}\frac{\partial \Psi}{\partial x}(x,t)\right)\right\} \subset \real^2_{t,\tau} \times \comp^{2n}_{x,\xi},
\endeq
which is Lagrangian with respect to the real symplectic form
\begeq
\label{eq2.19}
d\tau \wedge dt - {\rm Im}\, \sigma.
\endeq
The function $\tau - {\rm Im}\, \ell$ vanishes along $L_{\Psi}$, in view of (\ref{eq2.17}), and therefore its Hamilton vector field, computed with respect to the real symplectic form (\ref{eq2.19}), is tangent to $L_{\Psi}$, almost everywhere. Using the general relation
$$
H_{\ell}^{\rho} = H_{-{\rm Im}\, \ell}^{-{\rm Im}\, \sigma},
$$
valid for any $\ell(x,\xi)$ holomorphic, where $H_{\ell}^{\rho} = H_{\ell} + \overline{H_{\ell}}$ is the real vector field naturally associated to the holomorphic vector field $H_{\ell}$, see~\cite{Sj82}, we conclude that the vector field
$$
\partial_t + H_{-{\rm Im}\, \ell}^{-{\rm Im}\, \sigma} = \partial_t + H_{\ell}^{\rho}
$$
is tangent to $L_{\Psi}$. Identifying $H_{\ell}^{\rho}$ and $H_{\ell}$, we get therefore
$$
\Lambda_{\Psi(\cdot,t)} = \exp(t H_{\ell})\left(\Lambda_{\Phi_1}\right),\quad t\in \real.
$$
We claim now that the unique $C^{1,1}$--solution of (\ref{eq2.17}) is given by
\begeq
\label{eq2.20}
\Psi(x,t) = \Phi_1(x) + f(x-tx^*) - f(x) = \Phi_0(x) + f(x-tx^*).
\endeq
Indeed, using (\ref{eq2.20}), (\ref{eq2.3}), and the fact that $\ell|_{\Lambda_{\Phi_0}}$ is real we see that
\begeq
\label{eq2.21}
{\rm Im}\, \ell\left(x,\frac{2}{i}\frac{\partial \Psi}{\partial x}(x,t)\right) = {\rm Im}\left(x^* \cdot \frac{2}{i} \frac{\partial f}{\partial x} (x-tx^*)\right) = -2 {\rm Re}\left(x^* \cdot \frac{\partial f}{\partial x} (x-tx^*)\right),
\endeq
which agrees with
$$
\frac{\partial \Psi}{\partial t}(x,t) = \partial_t \left(f(x-t x^*)\right) = -\biggl(x^* \cdot f'_x(x-tx^*) + \overline{x^*}\cdot f'_{\overline{x}}(x-tx^*)\biggr).
$$
This shows (\ref{eq2.20}) and completes the proof.
\end{proof}

\bigskip
\noindent
Proposition \ref{prop_weights} can be viewed as an indication that we have a uniformly bounded Fourier integral operator,
\begeq
\label{eq2.22}
e^{-i\ell(x,hD_x)/h} = {\cal O}(1): H_{\Phi_1}(\comp^n) \rightarrow H_{\Phi_2}(\comp^n).
\endeq
Here the weighted space of holomorphic functions $H_{\Phi_2}(\comp^n)$ is defined analogously to (\ref{eq2.14}). In order to give a direct verification of the mapping property (\ref{eq2.22}), we observe that (\ref{eq2.5}) gives, for $u\in H_{\Phi_1}(\comp^n)$,
\begeq
\label{eq2.23}
\abs{(e^{-\Phi_2/h}e^{-i\ell(x,hD_x)/h}u)(x)} = \abs{(e^{-\Phi_1/h}u)(x-x^*)},\quad x\in \comp^n.
\endeq
Here $\Phi_2$ is given by (\ref{eq2.16}). It follows from (\ref{eq2.23}) that the operator in (\ref{eq2.22}) is an isometry, and therefore unitary.

\bigskip
\noindent
The discussion in this section may be summarized in the following result.
\begin{theo}
\label{theo_weights}
Let $\ell(x,\xi)$ be a complex linear form on $\comp^{2n}$ such that $\ell|_{\Lambda_{\Phi_0}}$ is real, and let us represent $\ell$ in the form {\rm (\ref{eq2.3})}. We have the unitary operators
\begeq
\label{eq2.24}
e^{-i\ell(x,hD_x)/h}: H_{\Phi_0}(\comp^n) \rightarrow H_{\Phi_0}(\comp^n),
\endeq
and
\begeq
\label{eq2.25}
e^{-i\ell(x,hD_x)/h}: H_{\Phi_1}(\comp^n) \rightarrow H_{\Phi_2}(\comp^n).
\endeq
Here in {\rm (\ref{eq2.25})}, the weight function $\Phi_1 = \Phi_0 + f \in C^{1,1}(\comp^n;\real)$ is such that $\norm{\nabla^k(\Phi_1 - \Phi_0)}_{L^{\infty}({\bf C}^n)}$ are small enough, for $k=0,1$, and $\Phi_2(x) = \Phi_0(x) + f(x-x^*)$.
\end{theo}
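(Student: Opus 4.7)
My plan is to derive Theorem \ref{theo_weights} as a direct corollary of the two pointwise modulus identities (\ref{eq2.5}) and (\ref{eq2.23}) already established earlier in this section, together with the fact that $e^{i\ell(x,hD_x)/h}$ provides an explicit two-sided inverse.

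For the first assertion (\ref{eq2.24}), I would start from the factorization (\ref{eq2.4}), recast in the symmetric form (\ref{eq2.6.1}), which exhibits $e^{-i\ell(x,hD_x)/h}$ as a translation by $x^* = \ell'_\xi$ flanked by multiplications by holomorphic exponentials. This makes it manifest that the operator preserves $\mathrm{Hol}(\comp^n)$, with inverse $e^{i\ell(x,hD_x)/h}$. To upgrade this to an $L^2$-isometry on $H_{\Phi_0}(\comp^n)$, I would compute the weight discrepancy $\Phi_0(x+\ell'_\xi/2) - \Phi_0(x-\ell'_\xi/2)$ using the exact Taylor identity (\ref{eq2.9}) for the real quadratic form $\Phi_0$, simplify via the symmetry (\ref{eq2.11}) together with the defining relation (\ref{eq2.2}) between $\ell'_x$ and $\ell'_\xi$, and conclude that the scalar prefactor in (\ref{eq2.7}) has modulus one. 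This gives (\ref{eq2.5}), from which isometry is immediate, and combined with bijectivity yields the unitarity (\ref{eq2.24}).

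For the second assertion (\ref{eq2.25}), the target weight $\Phi_2$ is identified by Proposition \ref{prop_weights} as the push-forward of $\Phi_1$ under the Hamilton-Jacobi flow of $\exp(H_\ell)$. The pointwise identity (\ref{eq2.23}) then follows from (\ref{eq2.5}) by a purely algebraic check: writing $\Phi_2(x) - \Phi_0(x) = f(x - x^*) = \Phi_1(x - x^*) - \Phi_0(x - x^*)$ and multiplying both sides of (\ref{eq2.5}) by $e^{-f(x-x^*)/h}$ produces (\ref{eq2.23}) directly. This gives the $H_{\Phi_1} \to H_{\Phi_2}$ isometry; surjectivity follows by applying the same argument to $e^{i\ell(x,hD_x)/h}$, whose associated shift is $-x^*$, after noting that the $C^{1,1}$ smallness hypothesis on $\Phi_1 - \Phi_0$ is preserved under the involution $\Phi_1 \leftrightarrow \Phi_2$ since $f \mapsto f(\cdot - x^*)$ leaves the relevant Lipschitz seminorms invariant.

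The only delicate step is the modulus-one computation for the prefactor in (\ref{eq2.7}): this is precisely where the reality of $\ell|_{\Lambda_{\Phi_0}}$ and the quadratic nature of $\Phi_0$ conspire to produce unitarity rather than mere uniform boundedness, and it is the mechanism that makes the weight transformation $\Phi_1 \mapsto \Phi_2$ in (\ref{eq2.16}) a genuine isometric isomorphism. No further estimates are needed beyond what has already been carried out in the exposition.
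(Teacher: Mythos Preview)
Your proposal is correct and follows essentially the same route as the paper: the theorem is stated there explicitly as a summary of the preceding discussion, and your outline recapitulates precisely that discussion --- the factorization (\ref{eq2.4})/(\ref{eq2.6.1}), the modulus-one computation (\ref{eq2.7})--(\ref{eq2.11.1}) yielding (\ref{eq2.5}), the identification of $\Phi_2$ via Proposition \ref{prop_weights}, and the passage from (\ref{eq2.5}) to (\ref{eq2.23}) by multiplying through by $e^{-f(x-x^*)/h}$. Your treatment of surjectivity via the inverse $e^{i\ell(x,hD_x)/h}$ and the observation that $f\mapsto f(\cdot-x^*)$ preserves the smallness hypotheses is a minor elaboration but does not depart from the paper's argument.
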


\medskip
\noindent
{\it Remark}. The purpose of this remark is to outline an alternative approach to the Hamilton-Jacobi equation (\ref{eq2.17}), leading directly the unitarity of the operator in (\ref{eq2.25}). To this end, let $u\in H_{\Phi_1}(\comp^n)$, and let us differentiate formally the scalar product
\begeq
\label{eq2.26}
(e^{-it\ell(x,hD_x)/h}u, e^{-it\ell(x,hD_x)/h}u)_{H_{\Psi_t}} = (u(t),u(t))_{H_{\Psi_t}}
\endeq
with respect to $t\in \real$. Here $\Psi_t\in C^{1,1}$ is to be chosen so that the time derivative of (\ref{eq2.26}) vanishes. We refer to~\cite{HeSjSt},~\cite{Sj10},~\cite{HiPrVi},~\cite{HiSj} for other instances of this approach, which is particularly straightforward in the present linear setting. We get
\begin{multline}
\label{eq2.27}
h\partial_t (u(t),u(t))_{H_{\Psi_t}} = h\partial_t \int u(t,x) \overline{u(t,x)} e^{-2\Psi_t(x)/h}\, L(dx) \\
= -i(\ell(x,hD_x)u(t),u(t))_{H_{\Psi_t}} + i (u(t),\ell(x,hD_x)u(t))_{H_{\Psi_t}} \\
- \int 2 \partial_t \Psi_t(x) \abs{u(t,x)}^2 e^{-2\Psi_t(x)/h}\, L(dx).
\end{multline}
Using that $hD_x(\overline{u(t,x)}) = 0$, we obtain that
\begin{multline}
\label{eq2.28}
(\ell(x,hD_x)u(t),u(t))_{H_{\Psi_t}} = \int \ell(x,hD_x)u(t,x)\, \overline{u(t,x)} e^{-2\Psi_t(x)/h}\, L(dx) \\
= \int \abs{u(t,x)}^2 \ell^{{\rm t}}(x,hD_x) \left(e^{-2\Psi_t(x)/h}\right)\, L(dx).
\end{multline}
Here
$$
\ell^{{\rm t}}(x,hD_x) = \ell(x,-hD_x)
$$
is the transpose of the first order differential operator $\ell(x,hD_x)$, and therefore we get the quantization-multiplication formula in its exact version, see also~\cite{HiSj15},
\begeq
\label{eq2.29}
(\ell(x,hD_x)u(t),u(t))_{H_{\Psi_t}} = \int \ell\left(x,\frac{2}{i}\frac{\partial \Psi_t}{\partial x}(x)\right) \abs{u(t,x)}^2 e^{-2\Psi_t(x)/h}\, L(dx).
\endeq
Combining (\ref{eq2.27}) and (\ref{eq2.29}), we get
\begeq
\label{eq2.30}
h\partial_t (u(t),u(t))_{H_{\Psi_t}} = -2 \int \left(\partial_t \Psi_t(x) - {\rm Im}\, \ell\left(x,\frac{2}{i}\frac{\partial \Psi_t}{\partial x}(x)\right)\right) \abs{u(t,x)}^2 e^{-2\Psi_t(x)/h}\, L(dx).
\endeq
The unitarity of the map
$$
e^{-it\ell(x,hD_x)/h}: H_{\Phi_1}(\comp^n) \rightarrow H_{\Psi_t}(\comp^n),\quad t \in \real,
$$
is therefore implied by the vanishing of the first factor under the integral sign in the right hand side of (\ref{eq2.30}), which agrees with the Hamilton-Jacobi equation (\ref{eq2.17}).

\section{Fourier inversion and compactly supported Gevrey symbols}
\label{Fourier_inv}
\setcounter{equation}{0}
The purpose of this section is to prove Theorem \ref{theo_main} in the special case when $a\in {\cal G}^s_0(\Lambda_{\Phi_0})$ is compactly supported. Here we shall let $s>1$ be arbitrary. When doing so, let us start by recalling the invariant form of the Fourier inversion formula on a real symplectic vector space $(W,\sigma)$ of dimension $2n$, see~\cite{T02}. Let
\begeq
\label{fourier1}
{\cal F}u(X) = \widehat{u}(X) = \frac{1}{\pi^n} \int e^{2i\sigma(X,Y)} u(Y)\, dY,\quad u\in {\cal S}(W),
\endeq
be the (twisted) Fourier transformation on $W$. Here $dY$ is the symplectic volume form on $W$. Then ${\cal F}^2 = I$ on ${\cal S}(W)$ and the Fourier transformation extends to a unitary selfadjoint involution of $L^2(W)$. After a change of variables, we get the following semiclassical version of the Fourier inversion formula,
\begeq
\label{fourier2}
a(X) = \frac{1}{\pi^n h^{2n}} \int e^{2i\sigma(X,Y)/h} \widehat{a}\left(\frac{Y}{h}\right)\,dY, \quad a\in {\cal S}(W).
\endeq
Specializing (\ref{fourier2}) to the case when $W = \Lambda_{\Phi_0}$, with the symplectic form given by $\sigma|_{\Lambda_{\Phi_0}}$, where $\sigma$ is the complex symplectic $(2,0)$--form on $\comp^{2n}$ defined in (\ref{eq2.1.1}), $a\in {\cal G}^s_0(\Lambda_{\Phi_0})$, and passing to the Weyl quantizations, we get
\begeq
\label{fourier3}
a^w(x,hD_x):= {\rm Op}_h^w(a) = \frac{1}{\pi^n h^{2n}} \int_{\Lambda_{\Phi_0}} \widehat{a}\left(\frac{Y}{h}\right) e^{2i\sigma((x,hD_x),Y)/h}\,dY.
\endeq
Here for $Y\in \Lambda_{\Phi_0}$, the complex linear form
\begeq
\label{fourier4}
\ell_Y(x,\xi) = \sigma((x,\xi), Y),\quad (x,\xi)\in \comp^{2n}
\endeq
is real along $\Lambda_{\Phi_0}$ and Theorem \ref{theo_weights} provides us therefore with some precise mapping properties for the magnetic translations $e^{2i\sigma((x,hD_x),Y)/h}$, for $Y\in \Lambda_{\Phi_0}$.

\bigskip
\noindent
It is now easy to finish the proof of Theorem \ref{theo_main} in the special case of compactly supported Gevrey symbols. Let $\Phi_1 = \Phi_0 + f \in C^{1,1}(\comp^n;\real)$ be such that
\begeq
\label{fourier5}
\norm{\nabla^k f}_{L^{\infty}({\bf C}^n)} \leq \frac{1}{C} h^{1 - \frac{1}{s}},\quad k =0,1,
\endeq
for some $C>0$. For each $Y = (y,\eta)\in \Lambda_{\Phi_0}$, in view of Theorem \ref{theo_weights}, we have the unitary operators
\begeq
\label{fourier6}
e^{2i\sigma((x,hD_x),Y)/h}: H_{\Phi_1}(\comp^n) \rightarrow H_{\Phi_2}(\comp^n),
\endeq
where $\Phi_2(x) = \Phi_0(x) + f(x+2y)$. It follows that
\begin{multline}
\label{fourier7}
\norm{e^{2i\sigma((x,hD_x),Y)/h}}_{{\cal L}(H_{\Phi_1}({\bf C}^n), H_{\Phi_1}({\bf C}^n))} \\
\leq \exp\left(\frac{\norm{\Phi_2 - \Phi_1}_{L^{\infty}({\bf C}^n)}}{h}\right) = \exp\left(\frac{\norm{f(\cdot + 2y) - f(\cdot)}_{L^{\infty}({\bf C}^n)}}{h}\right).
\end{multline}
Here
\begeq
\label{fourier8}
\norm{f(\cdot + 2y) - f(\cdot)}_{L^{\infty}({\bf C}^n)} \leq {\rm min}\left(2\norm{f}_{L^{\infty}({\bf C}^n)}, 2\norm{\nabla f}_{L^{\infty}({\bf C}^n)}\abs{y}\right),
\endeq
and combining (\ref{fourier5}), (\ref{fourier7}), and (\ref{fourier8}), we get with the same constant $C>0$ as in (\ref{fourier5}),
\begeq
\label{fourier9}
\norm{e^{2i\sigma((x,hD_x),Y)/h}}_{{\cal L}(H_{\Phi_1}({\bf C}^n), H_{\Phi_1}({\bf C}^n))} \leq \exp\left(\frac{2}{C} h^{-1/s} {\rm min}(1,\abs{y})\right),\quad Y = (y,\eta) \in \Lambda_{\Phi_0}.
\endeq
We claim that the integral in the right hand side of (\ref{fourier3}) converges in the space ${\cal L}(H_{\Phi_1}(\comp^n),H_{\Phi_1}(\comp^n))$, provided that $C>0$ in (\ref{fourier5}) is large enough. To this end let us recall from~\cite[Section 2]{HiLaSjZe} the following decay estimate for the (twisted) semiclassical Fourier transform of $a\in {\cal G}^s_0(\Lambda_{\Phi_0})$, see also~\cite[Lemma 12.7.4]{Hormander_book},
\begeq
\label{fourier10}
\abs{\widehat{a}\left(\frac{Y}{h}\right)} \leq C_0\, \exp \left(-\frac{1}{C_0} \left(\frac{\abs{y}}{h}\right)^{1/s}\right),\quad Y = (y,\eta) \in \Lambda_{\Phi_0},\quad C_0 > 0.
\endeq
We get, using (\ref{fourier3}), (\ref{fourier9}), and (\ref{fourier10}), for some $\widetilde{C}>0$,
\begin{multline}
\label{fourier11}
\norm{{\rm Op}_h^w(a)}_{{\cal L}(H_{\Phi_1}({\bf C}^n), H_{\Phi_1}({\bf C}^n))} \\ \leq \frac{1}{\pi^n h^{2n}} \int_{\Lambda_{\Phi_0}} \abs{\widehat{a}\left(\frac{Y}{h}\right)} \norm{e^{2i\sigma((x,hD_x),Y)/h}}_{{\cal L}(H_{\Phi_1}({\bf C}^n), H_{\Phi_1}({\bf C}^n))}\,dY \\
\leq \frac{\widetilde{C}}{h^{2n}} \int_{{\bf C}^n} \exp\left(\frac{1}{h^{1/s}} \left(-\frac{1}{C_0} \abs{y}^{1/s} + \frac{2}{C} {\rm min}(1,\abs{y})\right)\right)\, L(dy).
\end{multline}
Here $L(dy)$ is the Lebesgue measure on $\comp^n$. When estimating the integral in the right hand side of (\ref{fourier11}), we write, using that ${\rm min}(1,\abs{y})\leq \abs{y}^{1/s}$ for $y\in \comp^n$, and taking $C>0$ large enough,
\begin{multline}
\label{fourier12}
\frac{1}{h^{2n}} \int_{{\bf C}^n} \exp\left(\frac{1}{h^{1/s}} \left(-\frac{1}{C_0} \abs{y}^{1/s} + \frac{2}{C}{\rm min}(1,\abs{y})\right)\right)\, L(dy) \\
\leq \frac{1}{h^{2n}} \int_{{\bf C}^n} \exp\left(\frac{1}{h^{1/s}} \left(-\frac{1}{C_0} \abs{y}^{1/s} + \frac{2}{C}\abs{y}^{1/s}\right)\right)\, L(dy) \\
= \frac{1}{h^{2n}} \int_{{\bf C}^n} \exp\left(-\frac{\abs{y}^{1/s}}{h^{1/s}} \left(\frac{1}{C_0} - \frac{2}{C}\right)\right)\, L(dy) \\
= \int_{{\bf C}^n} \exp\left(-\abs{z}^{1/s}\left(\frac{1}{C_0} - \frac{2}{C}\right)\right)\, L(dz) = {\cal O}(1).
\end{multline}
Combining (\ref{fourier11}) and (\ref{fourier12}), we get
\begeq
\label{fourier13}
\norm{{\rm Op}_h^w(a)}_{{\cal L}(H_{\Phi_1}({\bf C}^n), H_{\Phi_1}({\bf C}^n))} \leq {\cal O}(1).
\endeq
This completes the proof of Theorem \ref{theo_main}, in the full range $s>1$, in the case when $a\in {\cal G}^s_0(\Lambda_{\Phi_0})$.

\section{Wiener conditions and rank one decompositions}
\label{Gevrey_Wiener}
\setcounter{equation}{0}
In the beginning of this section, rather than working on $\Lambda_{\Phi_0}$, for simplicity we shall work on $\real^m\simeq T^*\real^n$, where $m=2n$. Following~\cite{Sj94},~\cite{Sj_XEDP_95}, we shall first establish a Wiener type characterization of the Gevrey space ${\cal G}^s_b(\real^m)$, for $s>1$. See also~\cite{T19}. When doing so, let $e_1,\ldots,e_m$ be a basis of $\real^m$ and let
\begeq
\label{Wien1}
\Gamma = \bigoplus_{j=1}^m \z e_j
\endeq
be the corresponding integer lattice. Let $0\leq \chi_0 \in C^{\infty}_0(\real^m)$ be such that
\begeq
\label{Wien2}
\sum_{j \in \Gamma} \chi_{j} = 1, \quad \chi_{j}(x) = \chi_0(x-j).
\endeq
It was remarked in~\cite{Sj94} that we have $a\in S^0_{0,0}(\real^m)$, the space of $C^{\infty}$ functions on $\real^m$ bounded together with all of their derivatives, precisely when $a\in {\cal S}'(\real^m)$ is such that
\begeq
\label{Wien3}
\underset{j \in \Gamma} {\rm sup} \abs{{\cal F}(\chi_{j} a)(\xi)} \leq {\cal O}_N(1) \langle{\xi\rangle}^{-N},\quad N=1,2,\ldots, \quad \xi \in \real^m.
\endeq
Here ${\cal F}$ stands for the standard Fourier transformation, ${\cal F}u(\xi) = \widehat{u}(\xi) = \int e^{-ix\cdot \xi} u(x)\, dx$. When establishing an analogous characterization of the space ${\cal G}^s_b(\real^m) \subset S^0_{0,0}(\real^m)$, it will be natural to assume that the compactly supported function $\chi_0$ in (\ref{Wien2}) satisfies $\chi_0 \in {\cal G}_0^s(\real^m)$.

\begin{prop}
\label{Wiener-Gevrey}
Let $0\leq \chi_0 \in {\cal G}_0^s(\real^m)$ be such that {\rm (\ref{Wien2})} holds. We have $a\in {\cal G}^s_b(\real^m)$, for some $s>1$, if and only if $a\in {\cal S}'(\real^m)$ has the property that
\begeq
\label{Wien4}
\underset{j \in \Gamma} {\rm sup} \abs{{\cal F}(\chi_{j} a)(\xi)} \leq {\cal O}(1)\, \exp\left(-\frac{1}{C}\abs{\xi}^{1/s}\right), \quad \xi \in \real^m,
\endeq
for some $C>0$.
\end{prop}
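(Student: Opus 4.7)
The proof splits naturally into two implications, bridged by the observation that $\chi_j(x)=\chi_0(x-j)$ is merely a translate of $\chi_0\in {\cal G}_0^s(\real^m)$, so the localized family $\{\chi_j a\}_{j\in \Gamma}$ has uniformly controlled supports and encodes the Gevrey regularity of $a$ in a uniform way.

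For the necessity direction, assume $a\in {\cal G}_b^s(\real^m)$. By Leibniz's rule, ${\cal G}_b^s$ is stable under multiplication with uniform constants, so each product $\chi_j a$ lies in ${\cal G}_0^s$ with support contained in $j+\supp\chi_0$ and with Gevrey constants $A$, $C$ independent of $j$. The exponential decay (\ref{Wien4}) then follows from the classical estimate for the Fourier transform of compactly supported Gevrey functions, namely~\cite[Lemma 12.7.4]{Hormander_book}, already invoked in Section \ref{Fourier_inv}.

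For the sufficiency direction, assume (\ref{Wien4}). The rapid decay makes each ${\cal F}(\chi_j a)$ continuous and integrable, so $\chi_j a$ is $C^{\infty}$ with compact support in $j+\supp\chi_0$, and Fourier inversion gives
\begeq
\partial^\alpha(\chi_j a)(x) = \frac{1}{(2\pi)^m}\int (i\xi)^\alpha\, e^{ix\cdot \xi}\,{\cal F}(\chi_j a)(\xi)\,d\xi.
\endeq
Bounding with (\ref{Wien4}) and passing to polar coordinates reduces the problem to estimating
$$
\int_0^{\infty} r^{\abs{\alpha}+m-1}\,\exp(-r^{1/s}/C)\,dr,
$$
which, after the substitution $r=(Ct)^s$, is a Gamma integral that Stirling's formula bounds by ${\cal O}(1)\,\widetilde{C}^{\abs{\alpha}}(\abs{\alpha}!)^s$. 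Hence $\abs{\partial^\alpha(\chi_j a)(x)}\leq A'\,(C')^{\abs{\alpha}}(\alpha!)^s$ uniformly in $j\in \Gamma$ and $x\in \real^m$. Since $a=\sum_{j\in \Gamma}\chi_j a$ with only boundedly many non-vanishing terms at each point (by the uniform local finiteness of $\{\chi_j\}$), summing yields the global Gevrey bound (\ref{intr1.1}), so $a\in {\cal G}_b^s(\real^m)$.

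The main technical step is the integral/Stirling estimate in the sufficiency direction, which quantitatively matches exponential Fourier decay of order $1/s$ with Gevrey growth of order $s$ of the derivatives; care is needed only to tune the constants so that the bound comes out in the form $C^{\abs{\alpha}}(\alpha!)^s$ with $C$ independent of $\alpha$. This is a standard Gevrey analogue of the Paley--Wiener type estimate and, once carried out, the remaining bookkeeping with the partition of unity is routine.
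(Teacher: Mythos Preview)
Your proof is correct and follows essentially the same approach as the paper: Leibniz plus the standard Fourier decay estimate for compactly supported Gevrey functions in the necessity direction, and Fourier inversion combined with the Gamma integral/Stirling bound in the sufficiency direction. The only cosmetic difference is that in the sufficiency part the paper introduces an auxiliary cutoff $\widetilde{\chi}_0\in {\cal G}^s_0$ with $\widetilde{\chi}_0=1$ near $\supp\chi_0$ and applies Leibniz to $\widetilde{\chi}_j\cdot(\chi_j a)$ before summing, whereas you sum $\partial^{\alpha}(\chi_j a)$ directly via local finiteness; your route is slightly more economical and equally valid.
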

\begin{proof}
Let us first verify the necessity of (\ref{Wien4}). When doing so, we remark that given $\chi_0 \in {\cal G}^s_0(\real^m)$, $a\in {\cal G}^s_b(\real^m)$, we have, in view of the Leibniz formula,
\begeq
\label{Wien5}
\abs{\partial^{\alpha} \left(\chi_j(x) a(x)\right)} \leq C^{\abs{\alpha}+1} (\alpha!)^s, \quad x\in \real^m, \quad \alpha \in \nat^m,
\endeq
for some $C>0$, uniformly in $j\in \Gamma$. Following an argument in~\cite[Section 2]{HiLaSjZe} and writing
\begeq
\label{Wien6}
{\cal F}\left((1-\Delta)^{\frac{N}{2}}(\chi_j a)\right)(\xi) = \langle{\xi\rangle}^N {\cal F}(\chi_j a)(\xi),
\endeq
for some even integer $N$ large to be chosen, we get in view of (\ref{Wien5}), (\ref{Wien6}),
\begeq
\label{Wien7}
\abs{{\cal F}(\chi_j a)(\xi)} \leq \langle{\xi\rangle}^{-N} \norm{(1-\Delta)^{N/2} (\chi_j a)}_{L^1({\bf R}^m)} \leq C^{N+1} \langle{\xi\rangle}^{-N} (N!)^s,
\endeq
uniformly in $j\in \Gamma$. Choosing $N \sim (\abs{\xi}/C)^{1/s}$, as explained in~\cite{HiLaSjZe}, we get (\ref{Wien4}).

\medskip
\noindent
As for the sufficiency of (\ref{Wien4}), let $a\in {\cal S}'(\real^m)$ be such that (\ref{Wien4}) holds. As remarked above, we then have $a\in S^0_{0,0}(\real^m)$, and we only need to control the growth of the derivatives of $a$. When doing so, let us set $U_j(\xi) = (1/(2\pi)^m){\cal F}(\chi_{j} a)(\xi)$. Let $0\leq \widetilde{\chi}_0 \in {\cal G}^s_0(\real^m)$ be such that $\widetilde{\chi}_0 = 1$ near ${\rm supp}\, \chi_0$ and let us put $\widetilde{\chi}_j(x) = \widetilde{\chi}_0(x-j)$, $j\in \Gamma$. Using the Fourier inversion formula and (\ref{Wien2}), we may write
\begeq
\label{Wien8}
a(x) = \sum_{j\in \Gamma} \widetilde{\chi}_j(x) \int e^{ix\cdot \xi} U_j(\xi)\, d\xi,
\endeq
and therefore, when $\alpha \in \nat^m$, we have
\begeq
\label{Wien9}
D^{\alpha}a(x) = \sum_{j\in \Gamma} \sum_{\beta \leq \alpha} {\alpha \choose \beta} \left(D^{\alpha - \beta} \widetilde{\chi}_0\right)(x-j) \int e^{ix\cdot \xi} \xi^{\beta} U_j(\xi)\, d\xi.
\endeq
Using (\ref{Wien4}), we get, passing to polar coordinates and making a change of variables,
\begeq
\label{Wien10}
\abs{\int e^{ix\cdot \xi} \xi^{\beta} U_j(\xi)\, d\xi} \leq {\cal O}(1) \int \abs{\xi}^{\abs{\beta}} \exp\left(-\frac{1}{C}\abs{\xi}^{1/s}\right)\, d\xi
\leq C_1^{1+\abs{\beta}} \Gamma(s(\abs{\beta} + m)),
\endeq
for some $C_1 > 0$, uniformly in $j$. Here $\Gamma(x) = \int_0^{\infty} e^{-t} t^{x-1}\,dt$, $x>0$, is the $\Gamma$-function. Using Stirling's formula
\begeq
\label{Wien11}
\Gamma(\lambda) \sim \sqrt{\frac{2\pi}{\lambda}} \left(\frac{\lambda}{e}\right)^{\lambda}, \quad \lambda \rightarrow \infty,
\endeq
and the general inequality $\abs{\beta}! \leq m^{\abs{\beta}} \beta!$, valid for $\beta \in \nat^m$, we see therefore that
\begeq
\label{Wien12}
\abs{\int e^{ix\cdot \xi} \xi^{\beta} U_j(\xi)\, d\xi} \leq C^{1+\abs{\beta}} (\beta!)^s,
\endeq
for a (new) constant $C>0$, uniformly in $j$. Combining (\ref{Wien9}) and (\ref{Wien12}), we obtain that
\begeq
\label{Wien13}
\abs{\partial^{\alpha} a(x)} \leq C^{1+\abs{\alpha}} (\alpha!)^s, \quad x\in \real^m.
\endeq
The proof is complete.
\end{proof}

\bigskip
\noindent
{\it Remark}. The proof of the sufficiency of the condition (\ref{Wien4}) can alternatively be carried out by staying on the Fourier transform side. Indeed, let $\chi \in {\cal G}^s_0(\real^m)$, and let us write using (\ref{Wien2}),
\begeq
\label{Wien13.1}
\chi a = \sum_{j;\, {\rm supp}\, \chi\, \cap\, {\rm supp}\, \chi_j \neq \emptyset} \chi \chi_j a,
\endeq
\begeq
\label{Wien13.2}
{\cal F}(\chi a) = \frac{1}{(2\pi)^m} \sum_{j;\, {\rm supp}\, \chi\, \cap\, {\rm supp}\, \chi_j \neq \emptyset} {\cal F}\chi * {\cal F}(\chi_j a),
\endeq
where $*$ is the convolution star. Using that
\begeq
\label{Wien13.3}
\abs{{\cal F}\chi(\xi)} \leq {\cal O}(1)\, \exp\left(-\frac{1}{C} \abs{\xi}^{1/s}\right),\quad \xi \in \real^m,
\endeq
and (\ref{Wien4}), we obtain that
\begin{multline}
\label{Wien13.4}
\abs{({\cal F}\chi * {\cal F}(\chi_j a))(\xi)} \leq \int \abs{{\cal F}(\chi)(\xi-\eta)} \abs{{\cal F}(\chi_j a)(\eta)}\, d\eta \\
\leq {\cal O}(1) \int \exp\left(-\frac{1}{C}\left(\abs{\xi-\eta}^{1/s} + \abs{\eta}^{1/s}\right)\right)\, d\eta \leq {\cal O}(1)\, \exp\left(-\frac{1}{{\cal O}(1)} \abs{\xi}^{1/s}\right).
\end{multline}
Here the last inequality in (\ref{Wien13.4}) follows by estimating separately the contributions coming from the regions of integration $\abs{\xi -\eta} \geq \abs{\xi}/2$ and $\abs{\xi -\eta} \leq \abs{\xi}/2$. Using (\ref{Wien13.2}) and (\ref{Wien13.4}), we get
\begeq
\label{Wien13.5}
\abs{{\cal F}\chi(\xi)} \leq {\cal O}(1)\, \exp\left(-\frac{1}{{\cal O}(1)} \abs{\xi}^{1/s}\right),\quad \xi \in \real^m.
\endeq
In this estimate we can replace $\chi$ by any translate of $\chi$, and by Fourier's inversion formula we can conclude therefore that $a\in {\cal G}^s_b(\real^m)$.

\bigskip
\noindent
{\it Remark}. Let $\chi_0 \in {\cal G}^s_0(\real^m)$ be real valued such that $\norm{\chi_0}_{L^2} = 1$, and let us set $\chi_t(x) = \chi_0(x-t)$, $t\in \real^m$. We then have the following natural analog of Proposition \ref{Wiener-Gevrey}: $a\in {\cal G}^s_b(\real^m)$, for some $s>1$, precisely when we have
\begeq
\label{Wien14}
\underset{t \in {\bf R}^m} {\rm sup} \abs{{\cal F}(\chi_t a)(\xi)} \leq {\cal O}(1)\, \exp\left(-\frac{1}{C}\abs{\xi}^{1/s}\right), \quad \xi \in \real^m,
\endeq
for some $C>0$. Indeed, this follows by arguing as in the proof of Proposition \ref{Wiener-Gevrey}, replacing (\ref{Wien8}) by the following consequence of the Fourier inversion formula,
\begeq
\label{Wien15}
a(x) = \frac{1}{(2\pi)^m} \int\!\!\!\int e^{ix\cdot \xi} \chi_t(x) {\cal F}(\chi_t a)(\xi)\, d\xi\,dt.
\endeq
Here we may also notice that the same characterization of the class ${\cal G}^s_b(\real^m)$ remains valid when $\chi_0(x) = 2^{m/4}\pi^{-m/4} e^{-\abs{x}^2}$ is the $L^2$--normalized real Gaussian. Indeed, the derivatives of $\chi_0$ obey the pointwise bounds
\begeq
\label{Wien16}
\abs{\partial^{\alpha} \chi_0(x)} \leq C^{1+\abs{\alpha}} (\alpha!)^{1/2} e^{-\abs{x}^2/C}, \quad x\in \real^m,\,\,\alpha \in \nat^m,
\endeq
for some $C\geq 1$, which is sufficient for the arguments to go through. Let us also remark that such derivative bounds are well known~\cite{LMPSXu}, and can also be obtained directly by means of the Cauchy inequalities.


\bigskip
\noindent
Let us proceed to make some additional remarks in the real setting, in preparation for the discussion on the FBI-Bargmann transform side. Let us set
\begeq
\label{Wien16.1}
e_0(x) = Ch^{-n/4} e^{-x^2/2h},\quad x\in \real^n,
\endeq
where $C>0$ is chosen so that $\norm{e_0}_{L^2} = 1$. The distribution kernel of the orthogonal projection $L^2(\real^n) \ni u\mapsto (u,e_0)_{L^2}\, e_0$ onto $\comp e_0$ is given by $K(x,y) = e_0(x) \overline{e_0(y)}$, and the semiclassical Weyl symbol of the orthogonal projection has the form
\begeq
\label{Wien16.2}
\int e^{-iy\cdot \xi/h} K\left(x + \frac{y}{2},x - \frac{y}{2}\right)\, dy.
\endeq
A simple computation shows that the integral in (\ref{Wien16.2}) is given by $\varphi_0((x,\xi)/h^{1/2})$, where
\begeq
\label{Wien16.3}
\varphi_0(x,\xi) = (4\pi)^{n/2} C^2 e^{-(x^2 + \xi^2)}.
\endeq
See also~\cite[Section 3]{Sj_XEDP_95}.

\bigskip
\noindent
We shall now pass to work on $\Lambda_{\Phi_0}$, and to this end let $\phi(x,y)$ be a holomorphic quadratic form on $\comp^n_x \times \comp^n_y$ such that
\begeq
\label{Wien16.4}
{\rm Im}\, \phi''_{yy}>0,\quad {\rm det}\, \phi''_{xy} \neq 0,
\endeq
and with the property that the associated complex linear canonical transformation
\begeq
\label{Wien16.5}
\kappa_{\phi}: \comp^{2n} \ni (y,-\phi'_y(x,y)) \mapsto (x,\phi'_x(x,y))\in \comp^{2n}
\endeq
satisfies
\begeq
\label{Wien16.6}
\kappa_{\phi}\left(\real^m\right) = \Lambda_{\Phi_0}.
\endeq
Associated to the quadratic form $\phi$ is the generalized Bargmann transformation
\begeq
\label{Wien16.7}
{\cal T}u(x,h) = C h^{-3n/4} \int e^{i\phi(x,y)/h}\, u(y)\, dy,
\endeq
where $C>0$ is chosen suitably so that the map ${\cal T}$ is unitary,
\begeq
\label{Wien16.8}
{\cal T}: L^2(\real^n) \rightarrow H_{\Phi_0}(\comp^n),
\endeq
see~\cite{Sj95},~\cite{HiSj15},~\cite{M_book}.

\bigskip
\noindent
Let $a\in {\cal G}^s_b(\Lambda_{\Phi_0})$ be uniformly Gevrey, for some $s>1$, and let us set $\chi_0 = \varphi_0 \circ \kappa_{\phi}^{-1}\in {\cal S}(\Lambda_{\Phi_0})$, where $\varphi_0$ is given in (\ref{Wien16.3}). Put also $\chi_T(X) = \chi_0(X-T)$, $T\in \Lambda_{\Phi_0}$. The discussion above, see (\ref{Wien14}), shows that
\begeq
\label{Wien17}
\abs{{\cal F}(\chi_T a)(Y)} \leq {\cal O}(1)\, \exp\left(-\frac{1}{C_0} \abs{Y}^{1/s}\right),\quad Y \in \Lambda_{\Phi_0},
\endeq
for some $C_0>0$, uniformly in $T\in \Lambda_{\Phi_0}$. Here ${\cal F}$ is the symplectic Fourier transformation on $\Lambda_{\Phi_0}$, introduced in (\ref{fourier1}). Letting ${\cal F}_h$ be the semiclassical symplectic Fourier transformation on $\Lambda_{\Phi_0}$, given in (\ref{comp1}), we can write, in view of the Fourier inversion formula,
\begeq
\label{Wien17.1}
\chi_T(X) a(X) = \frac{1}{(\pi h)^n} \int_{\Lambda_{\Phi_0}} e^{2i\sigma(X,Y)/h} {\cal F}_h(\chi_T a)(Y)\, dY.
\endeq
Multiplying (\ref{Wien17.1}) by $\chi_0((X-T)/h^{1/2}) = \varphi_0 \left(\kappa_{\phi}^{-1}(X-T)/h^{1/2}\right)$ and integrating with respect to $T\in \Lambda_{\Phi_0}$, we get
\begeq
\label{Wien18}
M(h) a(X) = \frac{1}{(\pi h)^n} \int\!\!\!\int_{\Lambda_{\Phi_0} \times \Lambda_{\Phi_0}} e^{2i\sigma(X,Y)/h} \chi_0\left(\frac{X-T}{h^{1/2}}\right) {\cal F}_h(\chi_T a)(Y)\, dY\, dT,
\endeq
where
\begeq
\label{Wien18.1}
M(h) = \int_{\Lambda_{\Phi_0}} \chi_0(T) \chi_0\left(\frac{T}{h^{1/2}}\right)\, dT \asymp h^n.
\endeq
We would like to pass to the Weyl quantizations in (\ref{Wien18}), and to this end we shall first take a closer look at the Weyl quantization of the Schwartz function
$$
\Lambda_{\Phi_0} \ni X\mapsto b(X) = b_{Y,T}(X) = e^{2i\sigma(X,Y)/h} \chi_0\left(\frac{X-T}{h^{1/2}}\right),\quad Y,T\in \Lambda_{\Phi_0}.
$$
Let us start with some preliminary observations and computations, closely related to~\cite[Section 3]{Sj08}. Let $\ell$ be a complex linear form on $\comp^{2n}$ such that $\ell|_{\Lambda_{\Phi_0}}$ is real, so that $H_{\ell} \in \Lambda_{\Phi_0}$, and let $a\in {\cal S}(\Lambda_{\Phi_0})$. A straightforward computation using (\ref{comp15}) shows that
\begeq
\label{Wien19}
\left(e^{i\ell/h}\#a\right)(X) = e^{i\ell(X)/h} a\left(X + \frac{H_{\ell}}{2}\right),
\endeq
and similarly we find
\begeq
\label{Wien20}
\left(a\# e^{i\ell/h}\right)(X) = e^{i\ell(X)/h} a\left(X - \frac{H_{\ell}}{2}\right).
\endeq
It follows from (\ref{Wien19}), (\ref{Wien20}) that
\begeq
\label{Wien21}
\left(e^{i\ell/h}\# a \# e^{i\ell/h}\right)(X) = e^{2i\ell(X)/h} a(X),
\endeq
\begeq
\label{Wien22}
\left(e^{i\ell/h}\# a \# e^{-i\ell/h}\right)(X) = a\left(X + H_{\ell}\right).
\endeq
Following (\ref{fourier4}), let us write $\ell_Y(X) = \sigma(X,Y)$, for $Y\in \Lambda_{\Phi_0}$, and notice that $H_{\ell_T} = T$. Using (\ref{Wien21}), (\ref{Wien22}) we get
\begin{multline}
\label{Wien23}
b(X) = e^{2i\sigma(X,Y)/h} \chi_0\left(\frac{X-T}{h^{1/2}}\right) \\
= \left(e^{i\ell_Y/h}\# e^{-i\ell_T/h}\# \chi_0\left(\frac{\cdot}{h^{1/2}}\right) \# e^{i\ell_T/h}\# e^{i\ell_Y/h}\right)(X),
\end{multline}
and therefore
\begin{multline}
\label{Wien24}
b^w(x,hD_x) \\
= e^{i\ell_Y(x,hD_x)/h}\circ e^{-i\ell_T(x,hD_x)/h} \circ \chi_0^w\left(\frac{(x,hD_x)}{h^{1/2}}\right) \circ e^{i\ell_T(x,hD_x)/h} \circ e^{i\ell_Y(x,hD_x)/h}.
\end{multline}
Using (\ref{comp4}) we infer that
\begeq
\label{Wien24.1}
e^{i\ell_Y(x,hD_x)/h}\circ e^{-i\ell_T(x,hD_x)/h} = e^{i\ell_{Y-T}(x,hD_x)/h} e^{i\sigma(Y,-T)/2h} = e^{i\ell_{Y-T}(x,hD_x)/h} e^{i\sigma(T,Y)/2h},
\endeq
\begeq
\label{Wien24.2}
e^{i\ell_T(x,hD_x)/h} \circ e^{i\ell_Y(x,hD_x)/h} = e^{i\ell_{T+Y}(x,hD_x)/h} e^{i\sigma(T,Y)/2h},
\endeq
and combining (\ref{Wien24}), (\ref{Wien24.1}), and (\ref{Wien24.2}) we get
\begeq
\label{Wien25}
b^w(x,hD_x) = e^{i\sigma(T,Y)/h} e^{i\sigma((x,hD_x),Y-T)/h} \circ \chi_0^w\left(\frac{(x,hD_x)}{h^{1/2}}\right) \circ e^{i\sigma((x,hD_x),Y+T)/h}.
\endeq
An application of the exact Egorov theorem~\cite{Sj95},~\cite{HiSj15}, gives that
\begeq
\label{Wien25.1}
\chi_0^w\left(\frac{(x,hD_x)}{h^{1/2}}\right) = {\cal T} \circ \varphi_0^w\left(\frac{(x,hD_x)}{h^{1/2}}\right) \circ {\cal T}^{-1},
\endeq
and we conclude therefore that the operator in (\ref{Wien25.1}) is a rank one orthogonal projection on $H_{\Phi_0}(\comp^n)$, given by
\begeq
\label{Wien25.2}
\chi_0^w\left(\frac{(x,hD_x)}{h^{1/2}}\right) u = (u,v_0)\, v_0,\quad u\in H_{\Phi_0}(\comp^n).
\endeq
Here $v_0 = {\cal T} e_0$ and $(\cdot, \cdot)$ stands for the scalar product in $H_{\Phi_0}(\comp^n)$. Let us mention explicitly that we owe the idea of using Gaussians to pass to rank one projections to~\cite{GrHe}.

\medskip
\noindent
For future reference, let us also notice that an application of the exact (quadratic) stationary phase together with (\ref{Wien16.1}), (\ref{Wien16.7}) allows us to conclude that
\begeq
\label{Wien25.3}
v_0(x) = ({\cal T} e_0)(x,h) = C h^{-n/2} e^{ig(x)/h}, \quad C\neq 0,
\endeq
where $g$ is a holomorphic quadratic form on $\comp^n$. The strict positivity of the complex Lagrangian plane $\eta = iy$, $y\in \comp^n$, associated to the state $e_0$ in (\ref{Wien16.1}) implies that
\begeq
\label{Wien25.4}
\Phi_0(x) + {\rm Im}\, g(x) \asymp \abs{x}^2, \quad x\in \comp^n,
\endeq
see~\cite[Theorem 2.1]{CoHiSj}.

\bigskip
\noindent
Using (\ref{Wien25}) and (\ref{Wien25.2}), we get, using the unitarity of magnetic trans\-la\-tions on $H_{\Phi_0}(\comp^n)$,
\begin{multline}
\label{Wien25.5}
b^w(x,hD_x)u = e^{i\sigma(T,Y)/h} e^{i\sigma((x,hD_x),Y-T)/h} \circ \chi_0^w\left(\frac{(x,hD_x)}{h^{1/2}}\right) \circ e^{i\sigma((x,hD_x),Y+T)/h} u \\
= e^{i\sigma(T,Y)/h} (u, e^{-i\sigma((x,hD_x),Y+T)/h}v_0)\, e^{i\sigma((x,hD_x),Y-T)/h}v_0.
\end{multline}
Passing to the Weyl quantizations in (\ref{Wien18}), we obtain therefore, in view of (\ref{Wien25.5}),
\begin{multline}
\label{Wien26}
M(h) a^w(x,hD_x)u \\
= \frac{1}{(\pi h)^n} \int\!\!\!\int_{\Lambda_{\Phi_0} \times \Lambda_{\Phi_0}} e^{\frac{i\sigma(T,Y)}{h}} {\cal F}_h(\chi_T a)(Y)
(u, e^{-\frac{i\sigma((x,hD_x),Y+T)}{h}}v_0)\, e^{\frac{i\sigma((x,hD_x),Y-T)}{h}}v_0\, dY\, dT.
\end{multline}
Making the change of variables $Y' = Y -T$, $T' = -Y -T$, and using that $2\sigma(T,Y) = \sigma(Y',T')$, we obtain after dropping the primes,
\begin{multline}
\label{Wien26.1}
M(h) a^w(x,hD_x)u \\
= \frac{C}{h^n} \int\!\!\!\int_{(\Lambda_{\Phi_0})^2} e^{\frac{i\sigma(Y,T)}{2h}}
{\cal F}_h(\chi_{-\frac{Y+T}{2}} a)\left(\frac{Y-T}{2}\right)
(u, e^{\frac{i\sigma((x,hD_x),T)}{h}}v_0)\, e^{\frac{i\sigma((x,hD_x),Y)}{h}} v_0\, dY\, dT.
\end{multline}
Here we have incorporated the non-vanishing constant Jacobian into the (new) constant $C\neq 0$. We have therefore represented the operator $a^w(x,hD_x)$ as a super\-po\-siti\-on of rank one ker\-nels. The decomposition (\ref{Wien26.1}) can be regarded as the Bargmann transform side analogue of the corresponding decomposition in the real setting, established in~\cite{GrHe}.

\bigskip
\noindent
Let us next record the following observation, closely related to the computations in Section \ref{magn_transl}.
\begin{lemma}
\label{magn_transl_expl}
Let $\ell(x,\xi)$ be a complex linear form on $\comp^{2n}$ such that the restriction $\ell|_{\Lambda_{\Phi_0}}$ is real, and let us represent $\ell$ in the form {\rm (\ref{eq2.3})},
$$
\ell(x,\xi) = -\frac{2}{i}\frac{\partial \Phi_0}{\partial x}(x^*)\cdot x + x^*\cdot \xi,
$$
for some unique $x^* \in \comp^n$. There exists $C_{x^*,h}\in \comp$ with $\abs{C_{x^*,h}} = 1$, such that
\begeq
\label{Wien26.2}
e^{i\ell(x,hD_x)/h} u(x) = C_{x^*,h} e^{-2i{\rm Im}\,(\Phi''_{0,xx}x^* \cdot x)/h} e^{i\sigma(X,X^*)/2h} e^{(\Phi_0(x) - \Phi_0(x + x^*))/h} u(x + x^*).
\endeq
Here $u\in H_{\Phi_0}(\comp^n)$, and $X\in \Lambda_{\Phi_0}$, $X^* = H_{\ell} \in \Lambda_{\Phi_0}$ are the points in $\Lambda_{\Phi_0}$ above $x$, $x^* \in \comp^n$, respectively.
\end{lemma}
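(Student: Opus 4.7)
The plan is to apply the conjugation formula (\ref{eq2.4}) directly to $u$ and to rearrange the resulting exponential prefactor into the three groups appearing in (\ref{Wien26.2}). Replacing $\ell$ by $-\ell$ in (\ref{eq2.4}) and evaluating on $u$, one obtains
$$
e^{i\ell(x,hD_x)/h}u(x) = e^{\frac{i}{h}\ell'_x \cdot x}\, e^{\frac{i}{2h}\ell'_x\cdot \ell'_\xi}\, u(x+\ell'_\xi),
$$
into which we substitute $\ell'_\xi = x^*$ and $\ell'_x = 2i\partial_x \Phi_0(x^*)$ from (\ref{eq2.2}), turning the prefactor into $\exp\bigl(-\tfrac{2}{h}\partial_x\Phi_0(x^*)\cdot x - \tfrac{1}{h}\partial_x\Phi_0(x^*)\cdot x^*\bigr)$. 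The rest of the argument consists of matching real and imaginary parts of this exponent against the target.

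For the real part, Taylor's formula for the real quadratic form $\Phi_0$ gives $\Phi_0(x+x^*) = \Phi_0(x) + 2{\rm Re}(\partial_x\Phi_0(x)\cdot x^*) + \Phi_0(x^*)$, which together with the symmetry (\ref{eq2.11}) and Euler's identity $\Phi_0(x^*) = {\rm Re}(\partial_x \Phi_0(x^*)\cdot x^*)$ collects the real part of the exponent into $(\Phi_0(x) - \Phi_0(x+x^*))/h$, reproducing the last exponential factor in (\ref{Wien26.2}). For the imaginary part, we split $\partial_x \Phi_0(x^*) = \Phi''_{0,xx}x^* + \Phi''_{0,x\bar x}\overline{x^*}$; pairing with $x$, the $\Phi''_{0,xx}$-contribution yields directly $-\frac{2i}{h}{\rm Im}(\Phi''_{0,xx}x^*\cdot x)$, and pairing with $x^*$, only the $\Phi''_{0,xx}$-term contributes an imaginary part (the other being real by Hermiticity of $\Phi''_{0,x\bar x}$), producing the $x$-independent imaginary quantity $-\frac{i}{h}{\rm Im}(\Phi''_{0,xx}x^*\cdot x^*)$, which defines the unimodular constant $C_{x^*,h}$.

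The one step requiring care is the identification of the remaining cross-term $-\frac{2i}{h}{\rm Im}(\Phi''_{0,x\bar x}\overline{x^*}\cdot x)$ with $\frac{i}{2h}\sigma(X,X^*)$. Starting from $\sigma(X,X^*) = \frac{2}{i}(\partial_x \Phi_0(x)\cdot x^* - \partial_x \Phi_0(x^*) \cdot x)$, the symmetry of $\Phi''_{0,xx}$ cancels the pure $\Phi''_{0,xx}$-components, while the Hermiticity of $\Phi''_{0,x\bar x}$ gives $\Phi''_{0,x\bar x}\overline{x^*}\cdot x = \overline{\Phi''_{0,x\bar x}\bar x\cdot x^*}$, so the inner expression reduces to $2i\,{\rm Im}(\Phi''_{0,x\bar x}\bar x\cdot x^*) = -2i\,{\rm Im}(\Phi''_{0,x\bar x}\overline{x^*}\cdot x)$, and hence $\sigma(X,X^*) = -4\,{\rm Im}(\Phi''_{0,x\bar x}\overline{x^*}\cdot x)$. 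Assembling the four pieces then gives (\ref{Wien26.2}); the only real obstacle is precisely this bookkeeping in the mixed $\Phi''_{0,x\bar x}$-term, while every other step is a direct consequence of (\ref{eq2.4}) and the definition of $\Lambda_{\Phi_0}$.
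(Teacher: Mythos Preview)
Your proof is correct and follows essentially the same approach as the paper's: both start from the explicit formula (\ref{eq2.6}) (equivalently (\ref{eq2.4})), substitute $\ell'_\xi = x^*$, $\ell'_x = 2i\,\partial_x\Phi_0(x^*)$ via (\ref{eq2.2}), and then split the exponent into its real and imaginary parts, identifying the mixed $\Phi''_{0,x\bar x}$--contribution with $\sigma(X,X^*)$ through the formula $\sigma(X,X^*) = -4\,{\rm Im}(\Phi''_{0,\bar x x}x\cdot\overline{x^*})$, which you derive and the paper simply quotes as (\ref{Wien26.3}). The paper's proof is only a one-sentence sketch (``direct computation using (\ref{eq2.2}), (\ref{eq2.5}), (\ref{eq2.6}), and (\ref{Wien26.3})''), and your write-up is precisely the computation it has in mind.
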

\begin{proof}
This result follows by a direct computation, using (\ref{eq2.2}), (\ref{eq2.5}), (\ref{eq2.6}), as well as the following general expression for the complex symplectic (2,0)--form $\sigma$ on $\comp^{2n}$, restricted to $\Lambda_{\Phi_0}$,
\begeq
\label{Wien26.3}
\sigma(X,X^*) = -4 {\rm Im}\, \left(\Phi''_{0,\overline{x}x}x\cdot \overline{x^*}\right), \quad X,X^*\in \Lambda_{\Phi_0}.
\endeq
\end{proof}

\bigskip
\noindent
{\it Remark}. Let $u,v\in H_{\Phi_0}(\comp^n)$. It follows from Lemma \ref{magn_transl_expl} that the scalar product $(e^{i\ell(x,hD_x)/h} u, v)_{H_{\Phi_0}}$, viewed as a function of $x^*\in \comp^n$, or equivalently as a function of $X^*\in \Lambda_{\Phi_0}$, can be regarded as the twisted convolution of the functions $e^{-\Phi_0/h}u$, $e^{-\Phi_0/h}v\in L^2(\comp^n)$ in the sense of (\ref{comp7}), after these have been modified by some unimodular factors. It follows, in particular, that the function
$$
\comp^n \ni x^* \mapsto (e^{i\ell(x,hD_x)/h} u, v)_{H_{\Phi_0}}\in L^2(\comp^n),
$$
see~\cite{T02}. Ignoring the unimodular factors in (\ref{Wien26.2}), we get the more elementary pointwise estimate,
\begeq
\label{Wien27}
\abs{(e^{i\ell(x,hD_x)/h} u, v)_{H_{\Phi_0}}(x^*)} \leq \int_{{\bf C}^n} e^{-\Phi_0(x + x^*)/h} \abs{u(x+x^*)}\, e^{-\Phi_0(x)/h}\abs{v(x)}\, L(dx),
\endeq
which will be sufficient in what follows.

\bigskip
\noindent
We now come to complete the proof of Theorem \ref{theo_main}. When doing so, let us write, using (\ref{Wien26.1}), (\ref{Wien26.2}), and (\ref{Wien27}),
\begin{multline}
\label{Wien37}
M(h) \abs{a^w(x,hD_x)u(x)} e^{-\Phi_0(x)/h} \\
\leq \frac{{\cal O}(1)}{h^{2n}} \int\!\!\!\!\int\!\!\!\!\int_{({\bf C}^n)^3} {\cal U}\left(\frac{y-t}{h}\right) e^{-\Phi_0(z+t)/h}\abs{v_0(z+t)}
e^{-\Phi_0(x+y)/h}\abs{v_0(x+y)} \\
\abs{u(z)} e^{-\Phi_0(z)/h}\, L(dy)\, L(dt)\, L(dz),
\end{multline}
where, in view of (\ref{Wien17}),
\begeq
\label{Wien38}
{\cal U}(y) = \exp\left(-\frac{1}{C_0} \abs{y}^{1/s}\right),\quad y\in \comp^n.
\endeq
Letting $\Phi_1 = \Phi_0 + f\in C^{1,1}(\comp^n;\real)$ be such that (\ref{fourier5}) holds, we obtain next, making use of (\ref{Wien18.1}), (\ref{Wien25.3}), (\ref{Wien25.4}), and (\ref{Wien37}),
\begeq
\label{Wien39}
\abs{a^w(x,hD_x)u(x)} e^{-\Phi_1(x)/h} \leq \int_{{\bf C}^n} K(x,z) \abs{u(z)} e^{-\Phi_1(z)/h}\, L(dz),
\endeq
where
\begin{multline}
\label{Wien40}
K(x,z) \leq \\
\frac{{\cal O}(1)}{h^{4n}} \int\!\!\!\int_{{\bf C}^n \times {\bf C}^n} {\cal U}\left(\frac{y-t}{h}\right) e^{-\abs{z+t}^2/Ch} e^{-\abs{x+y}^2/Ch}\, e^{(f(z)-f(x))/h}\, L(dt)\, L(dy).
\end{multline}
We would like to show that the kernel $K(x,z)$ is dominated pointwise by an $L^1$ convolution kernel, in order to be able to apply Schur's lemma to (\ref{Wien39}). To this end let us consider the $t$--integration in (\ref{Wien40}) first, estimating the integral
\begeq
\label{Wien41}
\frac{1}{h^{2n}}\int_{{\bf C}^n} {\cal U}\left(\frac{y-t}{h}\right) e^{-\abs{z+t}^2/Ch}\, L(dt) =
\frac{1}{h^{2n}}\int_{{\bf C}^n} {\cal U}\left(\frac{t}{h}\right) e^{-\abs{z+y-t}^2/Ch}\, L(dt) = I_1 + I_2.
\endeq
Here
\begeq
\label{Wien42}
I_1 = \frac{1}{h^{2n}}\int_{\abs{z+y-t} \geq \abs{z+y}/2} {\cal U}\left(\frac{t}{h}\right) e^{-\abs{z+y-t}^2/Ch}\, L(dt) \leq
\norm{{\cal U}}_{L^1} e^{-\abs{z+y}^2/4Ch},
\endeq
and, in view of (\ref{Wien38}), we have
\begin{multline}
\label{Wien43}
I_2 = \frac{1}{h^{2n}}\int_{\abs{z+y-t} \leq \abs{z+y}/2} {\cal U}\left(\frac{t}{h}\right) e^{-\abs{z+y-t}^2/Ch}\, L(dt) \\
\leq \frac{1}{h^{2n}} \int_{\abs{z+y-t} \leq \abs{z+y}/2} {\cal U}\left(\frac{t}{h}\right)\, L(dt) \leq {\cal O}(1)\, \exp\left(-\frac{1}{C_0}
\left(\frac{\abs{z+y}}{h}\right)^{1/s}\right).
\end{multline}
Here we have also used that $\abs{z+y}\leq 2\abs{t}$ in the region of integration in (\ref{Wien43}).

\medskip
\noindent
Combining (\ref{Wien40}), (\ref{Wien41}), (\ref{Wien42}), and (\ref{Wien43}), we see that
\begeq
\label{Wien44}
K(x,z) \leq K_1(x,z) + K_2(x,z),
\endeq
where
\begin{multline}
\label{Wien45}
K_1(x,z) \leq e^{(f(z)-f(x))/h}\, \frac{{\cal O}(1)}{h^{2n}} \int_{{\bf C}^n} e^{-\abs{z+y}^2/Ch}\, e^{-\abs{x+y}^2/Ch}\, L(dy) \\
= e^{(f(z)-f(x))/h}\, \frac{{\cal O}(1)}{h^{2n}} \int_{{\bf C}^n} e^{-\abs{y}^2/Ch}\, e^{-\abs{z-x+y}^2/Ch}\, L(dy),
\end{multline}
and
\begin{multline}
\label{Wien46}
K_2(x,z) \leq e^{(f(z)-f(x))/h}\, \frac{{\cal O}(1)}{h^{2n}} \int_{{\bf C}^n} \exp\left(-\frac{1}{C_0}
\left(\frac{\abs{z+y}}{h}\right)^{1/s}\right) e^{-\abs{x+y}^2/Ch}\, L(dy) \\
= e^{(f(z)-f(x))/h}\, \frac{{\cal O}(1)}{h^{2n}} \int_{{\bf C}^n} \exp\left(-\frac{1}{C_0}
\left(\frac{\abs{z-x+y}}{h}\right)^{1/s}\right) e^{-\abs{y}^2/Ch}\, L(dy).
\end{multline}
When estimating the contribution $K_1(x,z)$ in (\ref{Wien45}), we notice that considering separately the regions of integration $\abs{z-x+y}\leq \abs{z-x}/2$ and
$\abs{z-x+y}\geq \abs{z-x}/2$, and using that
$$
\frac{1}{h^n} \int_{{\bf C}^n} e^{-\abs{y}^2/Ch}\, L(dy) = {\cal O}(1),
$$
we get
\begeq
\label{Wien47}
\frac{{\cal O}(1)}{h^{n}} \int_{{\bf C}^n} e^{-\abs{y}^2/Ch}\, e^{-\abs{z-x+y}^2/Ch}\, L(dy) \leq {\cal O}(1)\, e^{-\abs{x-z}^2/Ch},
\endeq
and therefore,
\begeq
\label{Wien48}
K_1(x,z) \leq \frac{{\cal O}(1)}{h^n} e^{(f(z)-f(x))/h} e^{-\abs{x-z}^2/Ch}.
\endeq
Alternatively, the estimate (\ref{Wien47}) can be obtained by an application of the exact stationary phase to the integral in the left hand side of (\ref{Wien47}). Arguing similarly, we find that
\begin{multline}
\label{Wien49}
\frac{{\cal O}(1)}{h^{n}} \int_{{\bf C}^n} \exp\left(-\frac{1}{C_0}
\left(\frac{\abs{z-x+y}}{h}\right)^{1/s}\right) e^{-\abs{y}^2/Ch}\, L(dy) \\
\leq {\cal O}(1)\, \exp\left(-\frac{1}{C_0} \left(\frac{\abs{z-x}}{h}\right)^{1/s}\right) + {\cal O}(1)\, e^{-\abs{x-z}^2/Ch}.
\end{multline}
Combining (\ref{Wien44}), (\ref{Wien48}), (\ref{Wien46}), and (\ref{Wien49}),  we get
\begeq
\label{Wien50}
K(x,z) \leq \frac{{\cal O}(1)}{h^n} e^{(f(z)-f(x))/h} e^{-\abs{x-z}^2/Ch} + \frac{{\cal O}(1)}{h^n} e^{(f(z)-f(x))/h} \exp\left(-\frac{1}{C_0} \left(\frac{\abs{z-x}}{h}\right)^{1/s}\right).
\endeq
To handle the second term in the right hand side of (\ref{Wien50}) we write, following (\ref{fourier8}) and using (\ref{fourier5}),
\begeq
\label{Wien51}
f(z) - f(x) \leq \frac{1}{{\cal O}(1)} h^{1 - \frac{1}{s}} {\rm min}\, \left(1,\abs{z-x}\right) \leq \frac{1}{{\cal O}(1)} h^{1 - \frac{1}{s}}\abs{z-x}^{1/s}.
\endeq
The Schur norm of the second term in the right hand side of (\ref{Wien50}) is therefore ${\cal O}(1)$, provided that the implicit constant in (\ref{Wien51}) is large enough, and we only need to estimate the Schur norm of the first term in the right hand side of (\ref{Wien50}). Using (\ref{Wien51}), we see that it suffices to control the $L^1$--norm
\begeq
\label{Wien52}
\frac{1}{h^n} \int_{{\bf C}^n} e^{-\abs{x}^2/Ch}\, \exp\left(\frac{h^{1-\frac{1}{s}}\abs{x}}{{\cal O}(1)\, h}\right)\, L(dx) = I_1 + I_2,
\endeq
where
\begeq
\label{Wien53}
I_1  = \frac{1}{h^n} \int_{\abs{x} \geq \widetilde{C} h^{1-\frac{1}{s}}} e^{-\abs{x}^2/Ch} \exp\left(\frac{h^{1-\frac{1}{s}}\abs{x}}{{\cal O}(1)\, h}\right)\, L(dx),
\endeq
and
\begeq
\label{Wien54}
I_2  = \frac{1}{h^n} \int_{\abs{x} \leq \widetilde{C} h^{1-\frac{1}{s}}} e^{-\abs{x}^2/Ch}\, \exp\left(\frac{h^{1-\frac{1}{s}}\abs{x}}{{\cal O}(1)\, h}\right)\, L(dx).
\endeq
Taking the constant $\widetilde{C}>0$ sufficiently large, we get
\begeq
\label{Wien55}
I_1 \leq \frac{1}{h^n} \int \exp\left(-\frac{\abs{x}^2}{{\cal O}(1)h}\right)\, L(dx) = {\cal O}(1).
\endeq
Furthermore,
\begin{multline}
\label{Wien56}
I_2 \leq \left(\frac{1}{h^n} \int_{\abs{x} \leq \widetilde{C} h^{1-\frac{1}{s}}} e^{-\abs{x}^2/Ch}\,L(dx)\right)
\exp\left(\frac{\widetilde{C} h^{2-\frac{2}{s}}}{{\cal O}(1) h}\right) \\
\leq {\cal O}(1)\, \exp\left({\cal O}(1) h^{1-\frac{2}{s}}\right).
\end{multline}
Recalling that $s\geq 2$, we conclude, in view of (\ref{Wien50}), (\ref{Wien51}), (\ref{Wien52}), (\ref{Wien55}), and (\ref{Wien56}), that the Schur norm of the kernel $K(x,z)$ is ${\cal O}(1)$. Applying Schur's lemma to (\ref{Wien39}), we get therefore,
$$
{\rm Op}_h^w(a) = {\cal O}(1): H_{\Phi_1}(\comp^n) \rightarrow H_{\Phi_1}(\comp^n).
$$
The proof of Theorem \ref{theo_main} is complete.

\bigskip
\noindent
{\it Remark}. The purpose of this remark is to verify that the decomposition (\ref{Wien26.1}) can also be used to give a direct proof of the $L^2$--boundedness result for the Wiener algebra of pseudodifferential operators, established in~\cite{Sj94},~\cite{Sj_XEDP_95}.
Indeed, the fact that decompositions such as (\ref{Wien26.1}) are useful to this end is well known in the real setting~\cite{GrHe}, and the observation here is that working on the FBI--Bargmann transform side seems to make the computations and estimates particularly natural. See also~\cite[Section 5]{Sj_XEDP_95}.

\medskip
\noindent
Let us therefore replace (\ref{Wien17}) by the weaker assumption,
\begeq
\label{Wien57}
\abs{{\cal F}(\chi_T a)(Y)} \leq U(Y),\quad Y \in \Lambda_{\Phi_0},
\endeq
uniformly in $T\in \Lambda_{\Phi_0}$. Here $U\in L^1(\Lambda_{\Phi_0})$. Setting
\begeq
\label{Wien58}
F(T) = (u, e^{\frac{i\sigma((x,hD_x),T)}{h}}v_0)_{H_{\Phi_0}},\quad T\in \Lambda_{\Phi_0}\simeq \comp^n,
\endeq
we get, in view of (\ref{Wien27}) and the Young inequality,
\begeq
\label{Wien59}
\norm{F}_{L^2(\Lambda_{\Phi_0})} \leq {\cal O}(1) \norm{u}_{H_{\Phi_0}} \norm{e^{-\Phi_0/h}v_0}_{L^1} \leq {\cal O}(h^{n/2}) \norm{u}_{H_{\Phi_0}}.
\endeq
Here we have also used (\ref{Wien25.3}), (\ref{Wien25.4}). An application of Schur's lemma together with (\ref{Wien57}) and (\ref{Wien59}) allows us next to conclude that the function
\begeq
\label{Wien60}
G(Y) := \frac{C}{h^n} \int_{\Lambda_{\Phi_0}} e^{\frac{i\sigma(Y,T)}{2h}} {\cal F}_h(\chi_{-(Y+T)/2} a)\left(\frac{Y-T}{2}\right) F(T)\, dT \in L^2(\Lambda_{\Phi_0})
\endeq
and we have
\begeq
\label{Wien61}
\norm{G}_{L^2(\Lambda_{\Phi_0})} \leq {\cal O}(1)\norm{F}_{L^2(\Lambda_{\Phi_0})} \leq {\cal O}(h^{n/2}) \norm{u}_{H_{\Phi_0}}.
\endeq
Using (\ref{Wien26.1}), (\ref{Wien58}), and (\ref{Wien60}), we can write
\begeq
\label{Wien62}
M(h) a^w(x,hD_x)u(x) = \int_{\Lambda_{\Phi_0}} G(Y) e^{\frac{i\sigma((x,hD_x),Y)}{h}} v_0(x)\, dY,
\endeq
and an application of Lemma \ref{magn_transl_expl} gives the pointwise estimate,
\begeq
\label{Wien63}
M(h) \abs{a^w(x,hD_x)u(x)}e^{-\Phi_0(x)/h} \leq \int_{\Lambda_{\Phi_0}} \abs{G(Y)} e^{-\Phi_0(x+y)/h} \abs{v_0(x+y)}\, dY.
\endeq
Here we have written $Y = (y,\eta)\in \Lambda_{\Phi_0}$. Applying the Young inequality once more we get, using also (\ref{Wien61}),
\begeq
\label{Wien64}
M(h) \norm{a^w u}_{H_{\Phi_0}} \leq \norm{G}_{L^2} \norm{e^{-\Phi_0/h}v_0}_{L^1}
\leq {\cal O}(h^{n/2}) \norm{G}_{L^2} \leq {\cal O}(h^n) \norm{u}_{H_{\Phi_0}}.
\endeq
Recalling finally (\ref{Wien18.1}) we conclude that
\begeq
\label{Wien65}
\norm{a^w u}_{H_{\Phi_0}} \leq {\cal O}(1) \norm{u}_{H_{\Phi_0}}, \quad u\in H_{\Phi_0}(\comp^n),
\endeq
provided that (\ref{Wien57}) holds. We have therefore recovered the $L^2$--boundedness result of~\cite{Sj94},~\cite{Sj_XEDP_95} in the $H_{\Phi_0}$--setting.

\begin{appendix}
\section{Weyl composition of symbols}
\label{comp_Gevrey}
\setcounter{equation}{0}
Let $(W,\sigma)$ be a real symplectic vector space of dimension $2n$ and let
\begeq
\label{comp1}
{\cal F}_h u(X) = \frac{1}{h^n} {\cal F}u\left(\frac{X}{h}\right) = \frac{1}{(\pi h)^n} \int e^{2i\sigma(X,Y)/h} u(Y)\, dY, \quad u\in {\cal S}(W),\quad 0 < h \leq 1,
\endeq
be the semiclassical (twisted) Fourier transformation on $W$. Here the map ${\cal F}$ is given in (\ref{fourier1}). We have ${\cal F}_h^2 = I$ on ${\cal S}'(W)$.

\bigskip
\noindent
We shall carry out a familiar computation composing two semiclassical Weyl quantizations, see~\cite[Chapter 7]{DiSj} for such computations on the real side. Let $a,b\in {\cal S}'(\Lambda_{\Phi_0})$ be such that ${\cal F}a$, ${\cal F}b\in L^1(\Lambda_{\Phi_0})$ and let us write following (\ref{fourier3}),
\begeq
\label{comp2}
a^w(x,hD_x) = \frac{1}{(\pi h)^{n}} \int_{\Lambda_{\Phi_0}} e^{2i\sigma((x,hD_x),Y)/h} {\cal F}_h a(Y) \,dY,
\endeq
\begeq
\label{comp3}
b^w(x,hD_x) = \frac{1}{(\pi h)^{n}} \int_{\Lambda_{\Phi_0}} e^{2i\sigma((x,hD_x),Y)/h} {\cal F}_h b(Y) \,dY.
\endeq
Using the composition law for magnetic translations
\begeq
\label{comp4}
e^{2i\sigma((x,hD_x),Y)/h} e^{2i\sigma((x,hD_x),Z)/h} = e^{2i\sigma((x,hD_x),Y+Z)/h} e^{2i\sigma(Y,Z)/h},
\endeq
see~\cite[(7.11)]{DiSj} for the corresponding result in the real domain, and making the change of variables $(Y,Z)\mapsto (Y+Z,Z)$, we get that the composition $a^w(x,hD_x)\circ b^w(x,hD_x)$ of the operators in (\ref{comp2}), (\ref{comp3}), is given by
\begin{multline}
\label{comp5}
\frac{1}{(\pi h)^{2n}} \int\!\!\!\int_{\Lambda_{\Phi_0} \times \Lambda_{\Phi_0}} e^{2i\sigma((x,hD_x),Y+Z)/h} e^{2i\sigma(Y,Z)/h} {\cal F}_h a(Y)
{\cal F}_h b(Z)\, dY\, dZ \\
= \frac{1}{(\pi h)^n} \int_{\Lambda_{\Phi_0}} e^{2i\sigma((x,hD_x),Y)/h} {\cal F}_h c(Y)\, dY = c^w(x,hD_x).
\end{multline}
Here
\begeq
\label{comp6}
{\cal F}_h c(X) = \frac{1}{(\pi h)^n} \int_{\Lambda_{\Phi_0}} e^{2i\sigma(X,Z)/h}
{\cal F}_ha(X-Z) {\cal F}_h b(Z)\, dZ \in L^1(\Lambda_{\Phi_0}).
\endeq
We shall now compute the semiclassical Fourier transform of the expression in the right hand side of (\ref{comp6}), leading to an integral representation formula for the symbol $c = a\# b \in L^{\infty}(\Lambda_{\Phi_0})\cap C(\Lambda_{\Phi_0})$. To this end, following~\cite{T02}, it will be convenient to introduce the (non-commutative) twisted convolution product on $\Lambda_{\Phi_0}$,
\begeq
\label{comp7}
(u *_{\sigma} v)(X) = \int_{\Lambda_{\Phi_0}} e^{2i\sigma(X,Y)/h} u(X-Y) v(Y)\, dY,
\endeq
where $u,v\in L^1(\Lambda_{\Phi_0})$, so that
\begeq
\label{comp8}
{\cal F}_h (a\#b) = \frac{1}{(\pi h)^n} {\cal F}_h a *_{\sigma} {\cal F}_h b.
\endeq

\medskip
\noindent
We have the following result, due to~\cite{T02}, whose proof we give for the convenience of the reader only.
\begin{prop}
\label{tw_conv}
We have if $u,v\in L^1(\Lambda_{\Phi_0})$,
\begeq
\label{comp9}
{\cal F}_h(u*_{\sigma} v) = ({\cal F}_h u) *_{\sigma} v.
\endeq
\end{prop}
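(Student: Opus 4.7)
The plan is to verify \eqref{comp9} by a direct computation, expanding both sides via the definitions of $\mathcal{F}_h$ and $*_\sigma$, and using the bilinearity and antisymmetry of $\sigma$ to identify the resulting expressions after a single change of variables. Fubini's theorem is applicable throughout, since $u, v\in L^1(\Lambda_{\Phi_0})$ and all exponential factors are unimodular, so both double integrals that appear are absolutely convergent.

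First, I would expand the left hand side, writing
$$
\mathcal{F}_h(u *_\sigma v)(X) = \frac{1}{(\pi h)^n} \int\!\!\!\int_{\Lambda_{\Phi_0} \times \Lambda_{\Phi_0}} e^{2i\sigma(X,Z)/h}\, e^{2i\sigma(Z,Y)/h}\, u(Z-Y)\, v(Y)\, dY\, dZ,
$$
and then perform the substitution $W = Z - Y$, $Y = Y$, whose Jacobian is $1$. Using the bilinearity of $\sigma$ and $\sigma(Y,Y) = 0$, the phase becomes
$$
\sigma(X, W + Y) + \sigma(W + Y, Y) = \sigma(X, W) + \sigma(X, Y) + \sigma(W, Y),
$$
so that
$$
\mathcal{F}_h(u *_\sigma v)(X) = \frac{1}{(\pi h)^n} \int\!\!\!\int e^{2i(\sigma(X,W) + \sigma(X,Y) + \sigma(W,Y))/h} u(W) v(Y)\, dW\, dY.
$$

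Next, I would expand the right hand side directly,
$$
((\mathcal{F}_h u) *_\sigma v)(X) = \frac{1}{(\pi h)^n} \int\!\!\!\int e^{2i\sigma(X,Y)/h}\, e^{2i\sigma(X-Y, W)/h}\, u(W)\, v(Y)\, dW\, dY,
$$
and simplify the total phase using bilinearity and the antisymmetry $\sigma(Y,W) = -\sigma(W,Y)$, obtaining
$$
\sigma(X,Y) + \sigma(X,W) - \sigma(Y,W) = \sigma(X,W) + \sigma(X,Y) + \sigma(W,Y).
$$
This matches the phase obtained for the left hand side, establishing \eqref{comp9}.

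There is no real obstacle here; the only point requiring a moment of care is making sure the change of variables and the grouping of the three $\sigma$-terms are carried out consistently, and that Fubini is invoked legitimately (guaranteed by the $L^1$ hypothesis and the unimodularity of the exponentials). One may also phrase the computation as a single identity $\sigma(X, Z) + \sigma(Z, Y) = \sigma(X, Z-Y) + \sigma(X, Y) + \sigma(Z-Y, Y)$, which is what drives the whole argument.
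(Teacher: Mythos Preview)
Your proof is correct and follows essentially the same approach as the paper: expand both sides from the definitions, perform a single linear change of variables with unit Jacobian, and match the phases using the bilinearity and antisymmetry of $\sigma$. The only cosmetic difference is that you substitute on the left hand side whereas the paper substitutes on the right, and you make the Fubini justification explicit.
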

\begin{proof}
Using (\ref{comp1}), (\ref{comp7}), let us write
\begin{multline}
\label{comp10}
{\cal F}_h(u*_{\sigma} v)(X) = \frac{1}{(\pi h)^n} \int_{\Lambda_{\Phi_0}} e^{2i\sigma(X,Y)/h} (u*_{\sigma} v)(Y)\, dY \\
= \frac{1}{(\pi h)^n} \int\!\!\!\int_{\Lambda_{\Phi_0} \times \Lambda_{\Phi_0}} e^{2i\sigma(X,Y)/h} e^{2i\sigma(Y,Z)/h} u(Y-Z) v(Z)\, dY\,dZ.
\end{multline}
On the other hand, we compute
\begin{multline}
\label{comp11}
(({\cal F}_h u) *_{\sigma} v)(X) = \int_{\Lambda_{\Phi_0}} e^{2i\sigma(X,Z)/h} ({\cal F}_h u)(X-Z) v(Z)\, dZ \\
= \frac{1}{(\pi h)^n} \int\!\!\!\int_{\Lambda_{\Phi_0} \times \Lambda_{\Phi_0}} e^{2i\sigma(X,Z)/h} e^{2i\sigma(X-Z,Y)/h} u(Y) v(Z)\, dY\,dZ.
\end{multline}
Making the change of variables $(Y,Z) \mapsto (Y-Z,Z)$, we can rewrite (\ref{comp11}) as follows,
\begeq
\label{comp12}
(({\cal F}_h u) *_{\sigma} v)(X) = \frac{1}{(\pi h)^n} \int\!\!\!\int_{\Lambda_{\Phi_0} \times \Lambda_{\Phi_0}} e^{2i\sigma(X,Z)/h} e^{2i\sigma(X-Z,Y-Z)/h} u(Y-Z) v(Z)\, dY\,dZ.
\endeq
Here
$$
\sigma(X,Z) + \sigma(X-Z,Y-Z) = \sigma(X,Z) + \sigma(X,Y) - \sigma(X,Z) - \sigma(Z,Y) = \sigma(X,Y) + \sigma(Y,Z),
$$
and therefore the expressions (\ref{comp10}) and (\ref{comp12}) agree.
\end{proof}

\bigskip
\noindent
Combining Proposition \ref{tw_conv} and (\ref{comp8}) with the fact that ${\cal F}_h^2 = I$, we get
\begeq
\label{comp13}
c = a\#b = \frac{1}{(\pi h)^n} a *_{\sigma} {\cal F}_h b,
\endeq
and therefore, assuming for simplicity that $a,b\in {\cal S}(\Lambda_{\Phi_0})$, we get
\begin{multline}
\label{comp14}
c(X) = \frac{1}{(\pi h)^n} \int_{\Lambda_{\Phi_0}} e^{2i\sigma(X,Y)/h} a(X-Y) {\cal F}_h b(Y)\, dY \\
= \frac{1}{(\pi h)^{2n}} \int\!\!\!\int_{\Lambda_{\Phi_0} \times \Lambda_{\Phi_0}} e^{2i\sigma(X,Y)/h}e^{2i\sigma(Y,Z)/h} a(X-Y) b(Z)\, dY\,dZ \\
= \frac{1}{(\pi h)^{2n}} \int\!\!\!\int_{\Lambda_{\Phi_0} \times \Lambda_{\Phi_0}} e^{2i\sigma(X-Z,Y)/h} a(X-Y) b(Z)\, dY\,dZ.
\end{multline}
We obtain finally, after a change of variables,
\begeq
\label{comp15}
c(X) = (a\#b)(X) =  \frac{1}{(\pi h)^{2n}} \int\!\!\!\int_{\Lambda_{\Phi_0} \times \Lambda_{\Phi_0}} e^{-2i\sigma(Y,Z)/h} a(X+Y)b(X+Z)\, dY\,dZ.
\endeq

\medskip
\noindent
{\it Remark}. The integral representation formula (\ref{comp15}) can also be inferred from the corresponding expression for the Weyl symbol of the composition $a^w(x,hD_x) \circ b^w(x,hD_x)$ in the real domain~\cite[Chapter 4]{Zw_book}, thanks to the metaplectic invariance of the Weyl calculus~\cite{Sj95},~\cite{HiSj15}.

\end{appendix}

\end{document}